\newtheorem{definition}{Definition}%[section]
\newtheorem{proposition}[definition]{Proposition}
\newtheorem{lemma}[definition]{Lemma}
\newtheorem{theorem}[definition]{Theorem}
\newcommand{\subscript}[2]{$#1#2$}
\newcommand{\RR}{{\mathbb R}} %set of real numbers
\newcommand{\proscal}[2]{\left\langle#1\:,#2\right\rangle}  % Produit scalaire
\newcommand{\variableTime}{t}
\newcommand{\variableS}{s}
\newcommand{\state}{x}
\newcommand{\statebis}{\state'}
\newcommand{\STATE}{{\mathbb X}}
\newcommand{\CONTROL}{\mathbb{U}}
\newcommand{\control}{u}
\newcommand{\controlBis}{u'}
\newcommand{\ControlPaths}{\mathcal{U}}
\newcommand{\dynamicMapping}{f}
\newcommand{\dynamicMappingBis}{g}
\newcommand{\dynamicMappingTer}{h}
\newcommand{\cone}{K}
\newcommand{\DomainTime}{[0,+\infty)}
\newcommand{\DUAL}{\RR^n}
\newcommand{\dual}{y}
\newcommand{\Flow}[1]{\Psi_{#1}}
\newcommand{\flow}[3]{\Psi_{#2}(#1,#3)} 
\newcommand{\controlledFlow}[4]{\Psi_{#2}^{#4}(#1,#3)} 
\newcommand{\defined}{=}
\newcommand{\interior}{{\rm{int}}}
\newcommand{\anyVector}{v}
\newcommand{\derivativeTime}{\frac{d}{dt}}
\newcommand{\targetSet}{\mathbb{D}}
\newcommand{\viabilityKernel}{\mathbb{V}}
\newcommand{\polar}{^\oplus}  %{^+}
\newcommand{\polarCone}{\cone\polar}
\newcommand{\orthogonal}{^\perp}
\newcommand{\subZero}{_0}
\newcommand{\WolbLarva}{L_W}
\newcommand{\WolbAdult}{A_W}
\newcommand{\NonWLarva}{L_U}
\newcommand{\NonWAdult}{A_U}
\renewcommand{\ll}{\prec\!\prec}
\newcommand{\nll}{\not\ll}
\newcommand{\eqsepv}{\; , \enspace}       
\newcommand{\eqfinv}{\; ,}                
\newcommand{\eqfinp}{\; .}
\newcommand{\np}[1]{(#1)}                                  
\newcommand{\bp}[1]{\big(#1\big)}                          
\newcommand{\Bp}[1]{\Big(#1\Big)}
\begin{document}

\title{Comparison Theorem for Viability Kernels\\via Conic Preorders}

\author{Michel De Lara\footnote{%
CERMICS, Ecole des Ponts, Marne-la-Vall\'ee, France}, 
Pedro Gajardo\footnote{%
Departamento de Matem\'atica, Universidad T\'ecnica Federico Santa Mar\'ia,
Avenida Espa\~{n}a 1680, Valpara\'iso, Chile}, 
Diego Vicencio$^\dagger$}

\maketitle

\begin{abstract}
In natural resource management, decision-makers often aim at maintaining the
state of the system within a desirable set for all times.
For instance, fisheries management procedures include keeping the
spawning stock biomass over a critical threshold.
Another example is given by the peak control of an epidemic outbreak
that encompasses maintaining the
number of infected individuals below medical treatment capacities.
In mathematical terms, one controls a dynamical system.
Then, keeping the state of the system within a desirable set for all times is 
possible when the initial state belongs to the so-called viability
kernel. We introduce the notion of conic quasimonotonicity reducibility.
With this property, we provide a comparison theorem by inclusion between two viability
kernels, corresponding to two control systems in the infinite horizon case. We
also derive conditions for equality. We illustrate the method with a model for the
biocontrol of a vector-transmitted epidemic. 
\end{abstract}

\textbf{Keywords:} convex cone, conic preorder, control theory, viability theory, comparison of flows.

% \MSC[2010] 34H05\sep  06A06 

%\pagebreak \tableofcontents \pagebreak

% Highlights
% (1)    Viability kernels is a useful notion in the control of ecological systems
% (2)    We show how generalized monotonicity simplifies the computation of viability kernels
% (3)    We illustrate the method in epidemic control

\section{Introduction}\label{sec:intro}

In natural resource management, one often aims at maintaining the state of
the system within a desirable set for all times like, for instance, 
spawning stock biomass over a critical threshold in fishery management
\cite{bene2001viability,ICES,eisenack2006viability,Krawczyketal:2013,MR2214187}, 
number of infected individuals below a health threshold in epidemic
control (see \cite{DeLara2016}, and the concept of \emph{endemic channel} in
\cite{Brady-Smith-Scott-Hay:2015} and in the Operational Guide of the World
Health Organization \cite{WHO:2017}),  
population abundance above extinction level in population viability analysis \cite{Williams-Nichols-Conroy:2002}.
This is possible when the
initial state belongs to the so-called \emph{viability kernel}
\cite{aubin1990,Aubin2011}. 
The viability approach --- consisting in characterizing, computing or estimating
the  corresponding viability kernel --- 
has notably been applied to the analysis of topics in natural resource management, as recently reviewed in \cite{zaccour}.

In the literature of the last decades, one can find several methods for the
challenging task of computing the viability kernel (see for instance,
\cite{Aubin2011,Bonneuil:2006,Deffuant-Chapel-Martin:2007,DLD,monotone,DGR,MAIDENS20132017,GH2019_vk,Krawczyketal:2013,YOUSEFI201913,SP94}). In
general, numerical methods for such computation can be
implemented only for systems with a few number of state variables 
or, as in \cite{Bonneuil:2006}, for a limited time horizon. This is
because of the so-called curse of dimensionality. 
%Therefore, general methods
%for computing the viability kernel  may only be applied to systems that do not
%have too many state variables. 
This is an important drawback in the study of
some natural resource management or epidemic control problems that have models
composed of many state variables, such as age-structured fish-stock
population models which often display more
than ten state variables (see \cite{caswell,DLD,ICES,QD}).

To overcome the curse of dimensionality, some approaches make use of 
linearity \cite{MAIDENS20132017} or of monotonicity properties
induced by the positive orthant in the state space
\cite{monotone,DGR,DeLara2016}.
In this work, we aim at obtaining a characterization of the viability kernel 
of controlled systems in the infinite horizon case
under monotonicity properties, but in a broad sense, namely induced by a 
so-called conic preorder. For this purpose, we consider a convex cone~$\cone$ in the state space
and the induced conic preorder~$\preceq_\cone$.
Our key assumption is that the dynamics defining the
system under study is $\cone$-quasimonotone, a generalization of the
cooperativeness property for dynamical systems. Under this assumption, 
we can establish a comparison theorem for the solutions
of the underlying differential equation \cite{Hirsch2004,Hirsch2003}. 
Our main contribution relies on a second assumption: 
the existence of a  \emph{reduction} of the
controls (to be explained later) associated with the convex cone~$\cone$. The idea of the
reduction is that, given a control path and the associated state
path, one can find another control path (ideally in a reduced control
space) whose associated state path is preordered with respect to the first
one. We prove that the problem of computing the viability kernel can be carried out by
exploring a smaller set of paths, hence reducing the complexity of the
problem. 

The paper is organized as follows.
In Sect.~\ref{Controlled_dynamical_systems_and_viability_kernels}, 
we present the main definitions regarding 
controlled dynamical systems and viability kernels.
Then, in Sect.~\ref{Comparison_of_viability_kernels_via_conic_orders},
we introduce conic preorders and prove our main result, that is,
a comparison theorem for viability kernels.
 Finally, Sect.~\ref{Application_to_viable_control_of_the_Wolbachia_bacterium}
is devoted to an illustration in the
biocontrol of a vector-transmitted epidemic.

\section{Controlled dynamical systems and viability kernels}
\label{Controlled_dynamical_systems_and_viability_kernels}

In~\S\ref{Controlled_dynamical_systems}, 
we present controlled dynamical systems and,
in~\S\ref{Desirable_set_and_viability_kernel},
the viability kernel associated with a controlled dynamical system
and a desirable set. 

\subsection{Controlled dynamical systems}
\label{Controlled_dynamical_systems}

We give a formal definition of controlled dynamics
and controlled dynamical systems,
including technical assumptions that will be useful in the paper. 
We consider \( \RR^{n} \) for \emph{state space}
and \( \RR^{m} \) for \emph{control space}, where $n$ and $m$ are positive integers.

\begin{definition}
A \emph{controlled dynamics} is a mapping
\( \dynamicMapping:\STATE \times \CONTROL \to \RR^{n} \),
where \( \STATE \subset \RR^{n} \) is a closed subset of~\( \RR^{n} \),
and \( \CONTROL \subset \RR^{m} \) is a (Borel) measurable subset of~\( \RR^{m} \),
with the following two properties:
\( \dynamicMapping \) is jointly measurable in the state and control variables;
\( \dynamicMapping \) is locally Lipschitz in the state variable 
uniformly in the control variable, that is, 
for every \(\state_0 \in \STATE\) there exists \(L>0\) and \(\delta>0\) such
that, for any \( \state, \statebis \in \STATE  \), 
\[
\|\state-\state_0\| \leq \delta 
\text{ and }
\|\statebis-\state_0\| \leq \delta \implies
\|\dynamicMapping(\state,\control)-\dynamicMapping(\statebis,\control)\| \leq 
L \|\state - \statebis\| \eqsepv \forall \control \in \CONTROL
\eqfinv
\]
where \(\|\cdot\|\) is any norm on~\(\RR^{n}\).
\label{de:controlled dynamics}
 \end{definition}

We define the \emph{set of (admissible) control paths} by 
\begin{equation}
\ControlPaths =\{ \control\np{\cdot}:\DomainTime\to\CONTROL
\mid \control\np{\cdot} \textrm{ is measurable} \} 
\eqfinp  
\label{eq:control_paths}
\end{equation}
Given a controlled dynamics $\dynamicMapping:\STATE \times
\CONTROL \to \RR^{n}$, as in Definition~\ref{de:controlled dynamics},
and a control path \( \control\np{\cdot}:\DomainTime\to\CONTROL \)
as in~\eqref{eq:control_paths}, it can be shown
(see \cite[Theorem~7.4.1, p.~263]{Clarke:1983} or \cite[Theorem~1.1, p.~178]{CLSW:1998})
that the differential equation 
\begin{equation}
    \dot{\state}(\variableTime) = 
\dynamicMapping\bp{\state(\variableTime),\control(\variableTime)} 
\eqsepv \state(0)=\state_0 \in \STATE \text{ given}
\label{eq:controlledModel}
\end{equation}
has a unique solution defined on an (open to the right) time interval 
\( [0,T) \subset \DomainTime \).
When this unique solution is defined for all \( t \in \DomainTime \),
we denote it by \( \state(t) =
\controlledFlow{\variableTime}{\dynamicMapping}{\state\subZero}{\control(\cdot)}
\), that is,
\begin{equation}
\state(t) =
\controlledFlow{\variableTime}{\dynamicMapping}{\state\subZero}{\control(\cdot)} 
\iff
\dot{\state}(\variableTime) = 
\dynamicMapping\bp{\state(\variableTime),\control(\variableTime)} 
\eqsepv \state(0)=\state_0 
\eqfinv
\label{eq:flow}
 \end{equation}
and we call the mapping~\( \Psi \)
the \emph{flow} of the \emph{controlled dynamical
  system}~\eqref{eq:controlledModel}. 
We also say that the controlled dynamics~$\dynamicMapping$
generates a \emph{global flow}.

\subsection{Desirable set and viability kernel}
\label{Desirable_set_and_viability_kernel}

In \emph{viability theory}, one aims to determine a set of
initial conditions which allow to keep the state and control of a
dynamical system inside a so-called desirable set
by means of suitable control paths  \cite{aubin1990,Aubin2011}.

\begin{definition}
Let ~\( \dynamicMapping:\STATE \times \CONTROL \to \RR^{n} \) be a given controlled dynamics
as in Definition~\ref{de:controlled dynamics}, 
and suppose that it generates a global flow~\( \Psi \). 
Given a subset 
\begin{equation*}
\targetSet \subset \STATE\times\CONTROL
\eqfinv  
\end{equation*}
called the \emph{desirable set}, 
we define the \emph{viability kernel}, associated 
with the controlled dynamics~$\dynamicMapping$
and with the desirable set~$\targetSet$,
by
\begin{equation}
\label{eq:viability_kernel}
\begin{split}
\viabilityKernel\np{\dynamicMapping,\targetSet} =\\
\left\{\state\subZero\in\STATE
\mid \exists \control(\cdot) \in \ControlPaths \eqsepv 
\Bp{ { \controlledFlow{\variableTime}{\dynamicMapping}{\state\subZero}{\control(\cdot)},
\control(\variableTime) } } \in \targetSet 
\eqsepv \forall \variableTime \in \DomainTime \right\} 
\eqfinp
\end{split}
\end{equation}
\label{de:viability_kernel}
\end{definition}
Thus, the viability kernel represents the set of initial conditions 
\( \state\subZero\in\STATE \)
such that there exists a control path $\control(\cdot) \in \ControlPaths $ in~\eqref{eq:control_paths}
for which the associated state and control paths, generated by~\eqref{eq:controlledModel},
remain in the desirable set~$\targetSet$ for all times. 

For a decision maker, knowing the viability kernel has practical
interest since it describes the set of states from which controls can be
found that maintain the system in a desirable configuration
forever. Nevertheless, computing this kernel is not an easy task in general.
%\mdl{I propose to supress the following in red}
%\rouge{because of the so-called curse of dimensionality,
%as most of  algorithms are based on the dynamic programming principle}.  
However, under additional assumptions on the dynamics and on the desirable set,
it is possible to simplify the computation %of the viability kernel
%in presence of many state variables, 
as we show in the following section.

\section{Comparison theorem for viability kernels via conic preorders}
\label{Comparison_of_viability_kernels_via_conic_orders}

Now, we present our main result, which is a comparison theorem for viability
kernels by means of so-called conic preorders.
In~\S\ref{Conic_orders_and_conic_quasimonotonicity}, 
we recall the notions of conic preorder and of conic quasimonotonicity.
Then, we propose the new definition of 
conic quasimonotonicity reducibility for controlled dynamical systems
in~\S\ref{Conic_quasimonotonicity_for_controlled_dynamical_systems}.
Thus equipped, we state our main result 
in~\S\ref{Comparison_of_viability_kernels}.

\subsection{Conic preorders and conic quasimonotonicity}
\label{Conic_orders_and_conic_quasimonotonicity}

%\subsubsection*{Conic preorder}

Let $\cone \subset \RR^{n}$ be a convex cone,
that is, $\alpha \cone \subset \cone$ for all $\alpha \in \RR_+$ (hence \( 0 \in \cone \)), 
and $\cone + \cone \subset \cone$. 
It is well-known \cite{Angeli,Hirsch2004,Hirsch2003,smith1995monotone}
that such a convex cone induces a preorder 
(that is, a transitive and reflexive relation) on~$\RR^{n}$,
denoted by~$\preceq_\cone$ and given by
\begin{equation}
\bp{ \forall \state,\statebis \in \RR^{n} } \qquad 
  \state \preceq_\cone \statebis
\iff 
\statebis - \state \in \cone
\eqfinp 
\label{eq:K-order}
\end{equation}
%
%\subsubsection*{Conic quasimonotonicity for dynamical systems}
%
Let \( \proscal{\cdot}{\cdot} \) stands for the usual inner product in
\(\RR^{n}\).
The \emph{dual cone} associated with the
cone~$\cone$ is \cite{Bauschke2011,Hirsch2003}
\begin{equation}
\polarCone = \left\{ \dual \in  \DUAL \mid 
\proscal{\state}{\dual} \ge 0 \eqsepv
\forall \state \in \cone \right\} 
\eqfinp
\label{eq:positive_polar_cone}
\end{equation}
The following definition is introduced in \cite{Hirsch2004,Hirsch2003}.
We reframe it with our own notations.

\begin{definition}[\cite{Hirsch2004,Hirsch2003}]
We say that a \emph{mapping} (dynamics)
\( \dynamicMappingTer:\STATE \times \DomainTime\to\RR^{n} \),
where \( \STATE \subset \RR^{n} \) is a closed subset of~\( \RR^{n} \),
is \emph{$\cone$-quasimonotone} if the following condition holds 
\begin{multline}
\label{eq:quasimonotoneCondition}
\bp{ \forall \state,\statebis \in \STATE \eqsepv 
\forall \dual \in \polarCone}\\
 \state \preceq_{\cone\cap \left\{\dual \right\}\orthogonal} 
\statebis \implies \dynamicMappingTer(\state,\variableTime) 
\preceq_{\left\{\dual\right\}\polar}
\dynamicMappingTer(\statebis,\variableTime)  \eqsepv 
\forall \variableTime \in \DomainTime 
\eqfinp 
\end{multline}
\label{de:K-quasimonotone_dynamics}
\end{definition}
In this definition, we use the preorders given by the convex cones $\cone\cap
\left\{\dual\right\}\orthogonal$ and $\left\{\dual\right\}\polar$, 
where \(\left\{\dual\right\}\orthogonal\) denotes the orthogonal space to~\(\dual\). 
From the definition~\eqref{eq:K-order} of the preorder induced by a convex cone, 
we obtain that 
\begin{subequations}
  \begin{align}
\state \preceq_{\cone\cap \left\{\dual\right\}\orthogonal}     \statebis  
&\iff 
\statebis - \state \in \cone 
\text{ and } 
\proscal{\statebis-\state}{\dual} = 0 
\eqfinv    
\\
\state \preceq_{\left\{\dual\right\}\polar} \statebis  
&\iff
    \proscal{\statebis-\state}{\dual} \ge 0 
\eqfinp
  \end{align}
\end{subequations}

When the mapping~$\dynamicMappingTer$ displays additional
regularity properties and the cone $\cone$ is one of the orthants in $\RR^n$, there exists a more amenable characterization of
$\cone$-quasimonotonicity,
as presented in the next proposition~\cite{smith1995monotone}.

\begin{proposition}(\cite[Proposition 5.1]{smith1995monotone})
If the mapping \( \dynamicMappingTer:\STATE \times \DomainTime\to\RR^{n} \)
in Definition~\ref{de:K-quasimonotone_dynamics} 
is differentiable with respect to the first variable,
where \( \STATE \subset \RR^{n} \) is the closure of an open subset of~\( \RR^{n} \),
and if the cone~$\cone$ is one of the orthants of $\RR^n$, that is
\[
\cone=\{ (\state_1,\ldots,\state_n) \in \RR^n 
\mid (-1)^{m_j}\state_j \geq 0 \eqsepv j=1,\ldots,n\} 
\eqfinv
\]
where $(m_1,\ldots,m_n) \in \{0,1\}^n$, 
then the mapping $\dynamicMappingTer$ is
$\cone$-quasimonotone if and only if  
\begin{equation}
 (-1)^{m_i+m_j}\frac{\partial \dynamicMappingTer_i}{\partial
  \state_j}(\state,\variableTime) \geq 0 
\eqsepv \forall i \neq j \eqsepv \forall (\state,\variableTime) \in
\STATE \times \DomainTime 
\eqfinp
\label{eq:cond-orthant}
\end{equation}
\label{prop:orthants}
\end{proposition}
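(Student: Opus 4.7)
The proof is a classical ``test on extremal directions'' argument, specialized to the orthant setting in which the dual cone coincides with the cone itself and its extremal rays are the signed coordinate vectors.

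I would start with a quick preliminary computation: for the orthant $\cone = \{ \state \in \RR^{n} : (-1)^{m_j} \state_j \geq 0 \}$ one has $\polarCone = \cone$. Indeed, testing a candidate $\dual \in \polarCone$ against the generators $(-1)^{m_j} e_j \in \cone$ gives $(-1)^{m_j} \dual_j \geq 0$, and the converse is immediate by decomposition on the canonical basis. This identifies the relevant test vectors in~\eqref{eq:quasimonotoneCondition}.

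For the \emph{only if} direction, I would apply~\eqref{eq:quasimonotoneCondition} with the specific extremal vector $\dual = (-1)^{m_i} e_i \in \polarCone$, so that $\{\dual\}\orthogonal = \{\state : \state_i = 0\}$. Given $i \neq j$ and $\state$ in the interior of $\STATE$, set $\statebis = \state + \varepsilon (-1)^{m_j} e_j$ for $\varepsilon>0$ small enough to keep $\statebis \in \STATE$. Then $\statebis - \state \in \cone$ and $(\statebis-\state)_i = 0$, so $\state \preceq_{\cone \cap \{\dual\}\orthogonal} \statebis$. The quasimonotonicity yields $(-1)^{m_i} \bp{\dynamicMappingTer_i(\statebis,\variableTime) - \dynamicMappingTer_i(\state,\variableTime)} \geq 0$; dividing by $\varepsilon$ and letting $\varepsilon \to 0^+$ produces $(-1)^{m_i+m_j} \partial \dynamicMappingTer_i / \partial \state_j (\state,\variableTime) \geq 0$. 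Continuity of the partial derivatives extends the inequality from the interior to all of $\STATE$.

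For the \emph{if} direction, take an arbitrary $\dual \in \polarCone$ and set $J = \{ j : \dual_j \neq 0 \}$. I would first observe that, for $\state \in \cone$ and $\dual \in \polarCone$, each product $\state_j \dual_j = \bp{(-1)^{m_j}\state_j}\bp{(-1)^{m_j}\dual_j}$ is nonnegative, so $\proscal{\state}{\dual}=0$ forces $\state_j = 0$ for every $j \in J$. Consequently $\state \preceq_{\cone \cap \{\dual\}\orthogonal} \statebis$ means $\statebis - \state \in \cone$ together with $(\statebis - \state)_j = 0$ for $j \in J$, while the desired conclusion $\dynamicMappingTer(\state,\variableTime) \preceq_{\{\dual\}\polar} \dynamicMappingTer(\statebis,\variableTime)$ reads $\sum_{j \in J} \dual_j \bp{\dynamicMappingTer_j(\statebis,\variableTime) - \dynamicMappingTer_j(\state,\variableTime)} \geq 0$. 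Applying the fundamental theorem of calculus along the segment from $\state$ to $\statebis$ (only indices $k \notin J$ contribute since $(\statebis-\state)_j=0$ for $j \in J$), I would rewrite this quantity as
\[
\int_0^1 \sum_{j \in J} \sum_{k \notin J} \dual_j \frac{\partial \dynamicMappingTer_j}{\partial \state_k}\bp{\state + s(\statebis - \state),\variableTime} (\statebis_k - \state_k)\, ds \eqfinp
\]
In every term one has $j \neq k$, and the three factors $\dual_j$, $\partial \dynamicMappingTer_j/\partial \state_k$, $\statebis_k-\state_k$ have signs $(-1)^{m_j}$, $(-1)^{m_j+m_k}$, $(-1)^{m_k}$ respectively, whose product is $(-1)^{2(m_j+m_k)}=1$. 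Hence the integrand is nonnegative and so is the integral, proving~\eqref{eq:quasimonotoneCondition}.

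The main subtlety is that $\STATE$ is only required to be the closure of an open set, so the segment from $\state$ to $\statebis$ may leave~$\STATE$; this is where the assumption on~$\STATE$ (closure of an open set, typically taken convex in this kind of statement) is used to make the integration step legitimate, and the only if direction is stated using interior points and extended by continuity.
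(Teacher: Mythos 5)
The paper does not actually prove this proposition: it is imported verbatim as a citation of \cite[Proposition 5.1]{smith1995monotone}, so there is no in-paper argument to compare yours against. Your reconstruction is the standard proof of that cited result and it is correct: the identification $\polarCone=\cone$ for an orthant, the test of~\eqref{eq:quasimonotoneCondition} on the extremal dual vectors $(-1)^{m_i}e_i$ with perturbations $\state+\varepsilon(-1)^{m_j}e_j$ for the \emph{only if} part, and the observation that $\proscal{\statebis-\state}{\dual}=0$ kills the coordinates indexed by $J=\{j:\dual_j\neq 0\}$ followed by the fundamental theorem of calculus and the sign count $(-1)^{2(m_j+m_k)}=1$ for the \emph{if} part, are all sound. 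Two hypotheses you implicitly strengthen are worth making explicit, and both are really imprecisions in the statement as transcribed in the paper rather than gaps in your argument: the extension of the inequality from the interior of $\STATE$ to its boundary requires the partial derivatives to be continuous (the statement only says ``differentiable'', whereas the cited source works with $C^1$ maps), and the integration along the segment from $\state$ to $\statebis$ requires that segment to lie in $\STATE$ (the source assumes a $p$-convexity condition on the domain, which ``closure of an open set'' alone does not give). You correctly flag the second point yourself.
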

% An example of such cones and characterization will be used 
% in Sect.~\ref{Application_to_viable_control_of_the_Wolbachia_bacterium}.
When the convex cone is the positive orthant~\(\cone=\RR_+^{n}\), 
condition~\eqref{eq:cond-orthant}  is called \emph{cooperativeness} in
\cite{smith1995monotone},
as it reads 
\( \frac{\partial \dynamicMappingTer_i}{\partial
  \state_j}(\state,\variableTime) \ge 0 
\eqsepv \forall i \neq j \eqsepv \forall (\state,\variableTime) \in
\STATE \times \DomainTime \).

\subsection{Conic quasimonotonicity reducibility for controlled dynamical systems}
\label{Conic_quasimonotonicity_for_controlled_dynamical_systems}

The following definition is new.

\begin{definition}
Let $\cone \subset \RR^{n}$ be a convex cone.
Let $\phi:\CONTROL\to\CONTROL$ be a measurable mapping.
We say that a controlled dynamics
$\dynamicMapping:\STATE \times \CONTROL \to \RR^{n}$,
as in Definition~\ref{de:controlled dynamics}, 
is \emph{\( \np{\cone,\phi} \)-quasimonotone reducible} if the two following conditions hold.
\begin{enumerate}[label=(\subscript{H}{{\arabic*}})]
\item
\label{it:quasimonotone_controlled_dynamics_a}
For all control path~$\control(\cdot) \in \ControlPaths$ in~\eqref{eq:control_paths}, 
the mapping \( {\dynamicMappingTer}_{\control\np{\cdot}}
: \STATE \times \DomainTime \to \RR^{n} \) 
defined by 
${\dynamicMappingTer}_{\control\np{\cdot}}(\state,\variableTime) \defined \dynamicMapping\bp{\state,\control(\variableTime)}$ 
is $\cone$-quasimonotone (as in Definition~\ref{de:K-quasimonotone_dynamics}).
\item
\label{it:quasimonotone_controlled_dynamics_b}
The measurable mapping $\phi:\CONTROL\to\CONTROL$ 
has the property that
\begin{equation}
\label{eq:controlledInequality}
	\dynamicMapping\np{\state,\control}
\preceq_\cone 
\dynamicMapping\bp{ \state,\phi\np{\control} }
\eqsepv 
\forall (\state,\control) \in \STATE\times\CONTROL
\eqfinp
\end{equation}
The measurable mapping~$\phi$ is called a \emph{$\cone$-reduction} 
for the controlled dynamics~$\dynamicMapping$.
\end{enumerate}
\label{de:quasimonotone_controlled_dynamics}
\end{definition}
The notion of $\cone$-reduction is interesting in practice if
the mapping $\phi:\CONTROL\to\CONTROL$ is not surjective
(that is\footnote{%
    $J \subsetneq L$ stands for $J\subset L$ and $J\not=L$.}, \(\phi(\CONTROL) \subsetneq \CONTROL\)),
and more precisely if its image \(\phi(\CONTROL) \) is ``small'' 
because, in some way, we are reducing the control space~\(\CONTROL\).
The following result provides a sufficient condition to compare flows of
controlled dynamics, based on \( \np{\cone,\phi} \)-quasimonotone reducibility.
 
\begin{proposition}
\label{pr:comparisonControlled}
Let $\cone \subsetneq \RR^{n}$ be a closed convex cone with nonempty interior and 
 $\phi:\CONTROL\to\CONTROL$  a measurable mapping. Let ~\( \dynamicMapping:\STATE \times \CONTROL \to \RR^{n} \) be a given controlled dynamics
as in Definition~\ref{de:controlled dynamics}, 
and suppose that it generates a global flow~\( \Psi \),
and that it is \( \np{\cone,\phi} \)-quasimonotone reducible, as in
Definition~\ref{de:quasimonotone_controlled_dynamics}.

Then, for any control path $\control(\cdot) \in \ControlPaths$ in~\eqref{eq:control_paths}, we have that 
\begin{equation}
  \begin{split}
\state\subZero,\statebis\subZero \in \STATE
\text{ and }
\state\subZero \preceq_\cone \statebis\subZero 
\implies \\
\controlledFlow{\variableTime}{\dynamicMapping}{\state\subZero}{\control(\cdot)} 
\preceq_\cone 
\controlledFlow{\variableTime}{\dynamicMapping}{\statebis\subZero}{\control_\phi(\cdot)} 
\eqsepv \forall \variableTime \in \DomainTime 
\eqfinv    
  \end{split}
\label{eq:monotonicityControlledFlow}
\end{equation}
where the reduced control path \( \control_\phi(\cdot)  \in \ControlPaths \)
is defined by 
\begin{equation}
  \control_\phi(t) = \phi\bp{\control(t)} \eqsepv 
\forall t \in \DomainTime 
\eqfinp
\label{eq:reduced_control_path}
\end{equation}
\end{proposition}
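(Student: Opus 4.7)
The strategy is to recognize the two trajectories as a $\cone$-subsolution and a solution of one common auxiliary non-autonomous differential equation, to which a standard $\cone$-quasimonotone comparison theorem applies. Fix a control path $\control(\cdot) \in \ControlPaths$ and set $\control_\phi(t) \defined \phi\bp{\control(t)}$, which lies in $\ControlPaths$ as a composition of measurable maps. Denote
\[
\state(t) \defined \controlledFlow{t}{\dynamicMapping}{\state\subZero}{\control(\cdot)}
\eqsepv
\statebis(t) \defined \controlledFlow{t}{\dynamicMapping}{\statebis\subZero}{\control_\phi(\cdot)}
\eqfinv
\]
and introduce $\dynamicMappingTer : \STATE \times \DomainTime \to \RR^{n}$ by
$\dynamicMappingTer(\state,t) \defined \dynamicMapping\bp{\state,\control_\phi(t)}$.
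By hypothesis~\ref{it:quasimonotone_controlled_dynamics_a} applied to~$\control_\phi(\cdot)$, the mapping~$\dynamicMappingTer$ is $\cone$-quasimonotone in the sense of Definition~\ref{de:K-quasimonotone_dynamics}.

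By construction, $\statebis(\cdot)$ is an absolutely continuous solution of $\dot{\zeta}(t) = \dynamicMappingTer\bp{\zeta(t),t}$ with $\zeta(0) = \statebis\subZero$. On the other hand, the reduction inequality~\eqref{eq:controlledInequality} evaluated at the point $\bp{\state(t),\control(t)}$ yields, for almost every $t \in \DomainTime$,
\[
\dot{\state}(t) = \dynamicMapping\bp{\state(t),\control(t)} \preceq_\cone \dynamicMapping\bp{\state(t),\phi\np{\control(t)}} = \dynamicMappingTer\bp{\state(t),t} \eqfinp
\]
Hence $\state(\cdot)$ is a $\cone$-subsolution of the very same non-autonomous differential equation, with ordered initial data $\state\subZero \preceq_\cone \statebis\subZero$. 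The conclusion~\eqref{eq:monotonicityControlledFlow} then follows from the standard comparison theorem for $\cone$-quasimonotone differential equations~\cite{Hirsch2003,Hirsch2004}: if $\dynamicMappingTer$ is $\cone$-quasimonotone and two absolutely continuous curves $\zeta_1, \zeta_2 : \DomainTime \to \STATE$ satisfy $\dot{\zeta}_1 \preceq_\cone \dynamicMappingTer(\zeta_1,\cdot)$, $\dot{\zeta}_2 = \dynamicMappingTer(\zeta_2,\cdot)$ almost everywhere together with $\zeta_1(0) \preceq_\cone \zeta_2(0)$, then $\zeta_1(t) \preceq_\cone \zeta_2(t)$ for every $t \in \DomainTime$; we apply this with $\zeta_1 = \state(\cdot)$ and $\zeta_2 = \statebis(\cdot)$.

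The main technical obstacle will be to verify that the cited comparison theorem applies in the present Carath\'eodory framework, where $\dynamicMappingTer$ is only measurable in~$t$ because $\control_\phi(\cdot)$ is only measurable, and for an abstract closed convex cone~$\cone \subsetneq \RR^{n}$ with nonempty interior (which explains why these hypotheses on~$\cone$ appear in the statement). If the invocation is not immediate, the classical boundary argument recovers it: set $T^\star \defined \sup\{T \ge 0 \mid \state(t) \preceq_\cone \statebis(t), \, \forall t \in [0,T]\}$, assume for contradiction that $T^\star < \infty$, observe that $w \defined \statebis(T^\star) - \state(T^\star)$ lies in $\partial\cone$ by maximality and continuity, pick a nonzero $\dual \in \polarCone$ with $\proscal{w}{\dual}=0$ (which exists by the supporting hyperplane theorem since $\interior(\cone) \neq \emptyset$), and combine~\ref{it:quasimonotone_controlled_dynamics_a} with~\eqref{eq:controlledInequality} and $\dual \in \polarCone$ to derive $\proscal{\dot{\statebis}(T^\star) - \dot{\state}(T^\star)}{\dual} \ge 0$; a standard $\varepsilon$-perturbation of~$\statebis\subZero$ in a direction of $\interior(\cone)$ promotes this weak inequality to the strict form needed for a contradiction with the maximality of~$T^\star$.
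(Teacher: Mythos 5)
Your reduction is essentially the one the paper uses: the paper packages the flow comparison as Lemma~\ref{lem:comparison} in~\ref{Appendix}, applied to $\dynamicMappingBis(\state,\variableTime)=\dynamicMapping\bp{\state,\control(\variableTime)}$ and $\dynamicMappingTer(\state,\variableTime)=\dynamicMapping\bp{\state,\control_\phi(\variableTime)}$, and your ``subsolution versus solution of the $\control_\phi$-dynamics'' formulation is the same statement in different clothing (a solution of $\dot\state=\dynamicMappingBis(\state,\variableTime)$ with $\dynamicMappingBis\preceq_\cone\dynamicMappingTer$ pointwise is precisely a $\cone$-subsolution of $\dot\state=\dynamicMappingTer(\state,\variableTime)$). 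Using~\ref{it:quasimonotone_controlled_dynamics_a} for the path $\control_\phi(\cdot)$ rather than $\control(\cdot)$ is immaterial, since both are admissible control paths and the lemma needs only one of the two dynamics to be quasimonotone.

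The gap is in your fallback argument, and it is load-bearing: as you yourself observe, the comparison principle in the required generality (abstract closed convex cone, merely measurable time dependence, two distinct right-hand sides) is not simply quotable from \cite{Hirsch2003,Hirsch2004} --- the paper proves it from scratch for exactly this reason, noting that Theorem~1.1 of \cite{Hirsch2003} only covers $\dynamicMappingBis=\dynamicMappingTer$. Two things go wrong in your sketch. First, the inequality you derive at the exit time, $\proscal{\dot{\statebis}(T^\star)-\dot{\state}(T^\star)}{\dual}\ge 0$, points the wrong way to contradict anything: the exit-time property itself forces $\proscal{\dot{\statebis}(T^\star)-\dot{\state}(T^\star)}{\dual}\le 0$ (the scalar function $\variableTime\mapsto\proscal{\statebis(\variableTime)-\state(\variableTime)}{\dual}$ is nonnegative before $T^\star$ and vanishes at $T^\star$), so combining the two yields only an equality, not a contradiction; moreover, monotone increase of $\proscal{\statebis-\state}{\dual}$ for one supporting functional~$\dual$ does not prevent $\statebis-\state$ from leaving~$\cone$ through another face. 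Second, and decisively, perturbing only the initial condition $\statebis\subZero\mapsto\statebis\subZero+\varepsilon\anyVector$ cannot promote the weak inequality to a strict one: the strictness that produces the contradiction must sit in the \emph{dynamics} comparison. One has to replace $\dynamicMappingTer$ by $\dynamicMappingTer_\varepsilon=\dynamicMappingTer+\varepsilon\anyVector$ with $\anyVector\in\interior\cone$, so that $\dynamicMappingBis(\state,\variableTime)\ll_\cone\dynamicMappingTer_\varepsilon(\state,\variableTime)$ and hence $\proscal{\dynamicMappingTer_\varepsilon-\dynamicMappingBis}{\dual}>0$ for every $\dual\in\polarCone\setminus\{0\}$, which is what clashes with the $\le 0$ obtained at the exit time; one then lets $\varepsilon\downarrow 0$ and uses closedness of~$\cone$. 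This double perturbation (initial datum \emph{and} vector field) is exactly the structure of the paper's proof of Lemma~\ref{lem:comparison}, and without it your contradiction does not materialize.
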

  
\begin{proof}
The control path \( \control_\phi(\cdot) \),
defined by \( \control_\phi(t) = \phi\bp{\control(t)} \) 
for all \( t \in \DomainTime \), is measurable 
as both \( \control(\cdot) \) and \( \phi \) are measurable mappings. 
For a control path $\control(\cdot) \in \ControlPaths$ in~\eqref{eq:control_paths}, 
we define the dynamics mappings 
\(\dynamicMappingTer_{\control\np{\cdot}}:\STATE \times \DomainTime\to\RR^{n}\)
and
\(\dynamicMappingTer_{\control_\phi\np{\cdot}}:\STATE \times \DomainTime\to\RR^{n}\)
by
\begin{equation*}
\dynamicMappingTer_{\control\np{\cdot}}(\state,\variableTime) =
\dynamicMapping\bp{\state,\control(\variableTime)}
\text{ and } 
\dynamicMappingTer_{\control_\phi\np{\cdot}}(\state,\variableTime) =
\dynamicMapping(\state,\control_\phi(\variableTime)) 
\eqsepv \forall (\state,\variableTime) \in \STATE \times \DomainTime
\eqfinp
\end{equation*}
By assumption~\ref{it:quasimonotone_controlled_dynamics_a} 
in Definition~\ref{de:quasimonotone_controlled_dynamics},
the dynamics mapping~${\dynamicMappingTer}_{\control\np{\cdot}}$
is $\cone$-quasimonotone.
By assumption~\ref{it:quasimonotone_controlled_dynamics_b} 
in Definition~\ref{de:quasimonotone_controlled_dynamics},
Equation~\eqref{eq:controlledInequality} gives that 
\[
{\dynamicMappingTer}_{\control\np{\cdot}}(\state,\variableTime) \preceq_\cone 
{\dynamicMappingTer}_{\control_\phi\np{\cdot}}(\state,\variableTime) 
\eqsepv \forall (\state,\variableTime)\in\STATE\times\DomainTime
\eqfinp
\]
Looking at the assumptions of Lemma~\ref{lem:comparison},
in~\ref{Appendix},
we can check that they are all satisfied.
The result~\eqref{eq:monotonicityControlledFlow} follows directly. 
\end{proof}

Let us contrast the assumptions in Proposition~\ref{pr:comparisonControlled}
with the following ones, given in \cite{Angeli}:
\begin{enumerate}[label=(\subscript{\hat{H}}{{\arabic*}})]
    \item 
\label{condition:3} 
The control set \(\CONTROL\) is a convex subset of $\RR^{m}$
and there exists a preorder $\preceq_{\cone_\CONTROL}$ given by a closed convex cone
$\cone_\CONTROL\subset \RR^{m}$;
    \item 
\label{condition:4}
For all $\control(\cdot) \in \ControlPaths$ in~\eqref{eq:control_paths}, the mapping
$(\state,\variableTime)\to\dynamicMapping(\state,\control(\variableTime))$
is $\cone$-quasimonotone;
    \item 
\label{condition:5} 
For any control paths $\control(\cdot),\controlBis(\cdot) \in \ControlPaths$, 
as in~\eqref{eq:control_paths}, one has that, if $\control(\variableTime) \preceq_{\cone_\CONTROL}
\controlBis(\variableTime)$ for all $\variableTime \in \DomainTime$, then
$\dynamicMapping(\state,\control(\variableTime)) \preceq_\cone
\dynamicMapping(\state,\controlBis(\variableTime))$ for all $\state \in
\STATE$ and $\variableTime \in \DomainTime$.  
\end{enumerate}
The result in~\cite{Angeli} is a particular case of our result,
because the assumptions
\ref{condition:3}, \ref{condition:4} and \ref{condition:5} imply our assumptions
\ref{it:quasimonotone_controlled_dynamics_a} and
\ref{it:quasimonotone_controlled_dynamics_b} 
in Definition~\ref{de:quasimonotone_controlled_dynamics}. 
Indeed, first, condition \ref{it:quasimonotone_controlled_dynamics_a} is
the same as \ref{condition:4}.
Second, by taking for $\cone$-reduction mapping
any measurable mapping~$\phi: \CONTROL\to\CONTROL $ such that 
\( \phi(\control) \in (\control + \cone_\CONTROL)\cap \CONTROL \), 
we see that conditions \ref{condition:3} and \ref{condition:5} imply
\ref{it:quasimonotone_controlled_dynamics_a}. 
Therefore, to obtain the monotonicity property~\eqref{eq:monotonicityControlledFlow}, 
it is not necessary to have a preorder 
defined on the control space~\(\RR^{m}\) as in condition~\ref{condition:3}
in~\cite{Angeli}, but it is
enough to find a $\cone$-reduction % \(\phi:\CONTROL\rightrightarrows\CONTROL\)
as in condition~\ref{it:quasimonotone_controlled_dynamics_b}.

\subsection{Comparison theorem for viability kernels}
\label{Comparison_of_viability_kernels}

Now, we are ready to provide a comparison result for viability kernels, the main purpose of this work. 

\begin{theorem}
\label{theo:viability2}
Let $\cone \subsetneq \RR^{n}$ be a closed convex cone with nonempty interior and  $\phi:\CONTROL\to\CONTROL$  a measurable mapping.
Let ~\( \dynamicMapping:\STATE \times \CONTROL \to \RR^{n} \) be  a given  controlled dynamics
as in Definition~\ref{de:controlled dynamics}, 
and suppose that it generates a global flow~\( \Psi \),
and that it is \( \np{\cone,\phi} \)-quasimonotone reducible, as in
Definition~\ref{de:quasimonotone_controlled_dynamics}.
Let $\targetSet \subset \STATE\times\CONTROL$ be a desirable set.
We introduce
\begin{enumerate}
\item 
the \emph{reduced controlled dynamics}
$\dynamicMapping_\phi:\STATE \times \CONTROL \to \RR^{n}$
defined by
\begin{equation}
\dynamicMapping_\phi\np{\state,\control} = 
\dynamicMapping\bp{\state,\phi\np{\control}}
\eqsepv \forall  (\state,\control) \in \STATE\times\CONTROL
\eqfinv
\label{eq:reduced_controlled_dynamics}
\end{equation}
\item 
the \emph{extended desirable set}
\( \targetSet_\cone \subset \STATE \times \CONTROL \)
defined by
\begin{equation}
\label{eq:newTargetSet}
    \targetSet_\cone =  \targetSet + \bp{\cone \times \{0\}}
\eqfinp
\end{equation}
\end{enumerate}
Then, we have the following inclusion of viability kernels:
\begin{equation}
\viabilityKernel(\dynamicMapping,\targetSet) \subset 
\viabilityKernel(\dynamicMapping_\phi,\targetSet_\cone)
\eqfinp
\label{eq:inclusion_of_viability_kernels}
\end{equation}
Furthermore, if
\begin{equation}
\bigcup_{(\state,\control) \in \targetSet} (\state+\cone)\times \phi(\control) \subset \targetSet
\eqfinv
\label{eq:equality_of_viability_kernels_assumption}
\end{equation}
then we have the following equality between viability kernels:
\begin{equation}
\viabilityKernel(\dynamicMapping,\targetSet) = 
\viabilityKernel(\dynamicMapping_\phi,\targetSet_{\cone})
\eqfinp
\label{eq:equality_of_viability_kernels}
\end{equation}
\end{theorem}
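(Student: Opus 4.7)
The plan is to prove the two statements separately by exhibiting an explicit viable control path for the target system, built from the witness control path of the source system, and then checking pointwise membership in the relevant desirable set. A key technical identity, used in both directions, is that for any control path $\control(\cdot) \in \ControlPaths$,
\begin{equation*}
\controlledFlow{\variableTime}{\dynamicMapping}{\state\subZero}{\control_\phi(\cdot)}
=
\controlledFlow{\variableTime}{\dynamicMapping_\phi}{\state\subZero}{\control(\cdot)}
\eqsepv \forall \variableTime \in \DomainTime \eqfinv
\end{equation*}
which is immediate from the uniqueness of solutions to the ODE since both sides solve $\dot{\state}(\variableTime) = \dynamicMapping\bp{\state(\variableTime),\phi\np{\control(\variableTime)}}$ with initial condition $\state\subZero$, by the very definition~\eqref{eq:reduced_controlled_dynamics} of $\dynamicMapping_\phi$.

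For the inclusion~\eqref{eq:inclusion_of_viability_kernels}, I would take $\state\subZero \in \viabilityKernel(\dynamicMapping,\targetSet)$ with associated viable control $\control(\cdot) \in \ControlPaths$, so that $\bp{ \controlledFlow{\variableTime}{\dynamicMapping}{\state\subZero}{\control(\cdot)}, \control(\variableTime) } \in \targetSet$ for all $\variableTime$. Applying Proposition~\ref{pr:comparisonControlled} with $\statebis\subZero = \state\subZero$ (so $\state\subZero \preceq_\cone \state\subZero$ trivially) and using the identity above, I obtain
\begin{equation*}
\controlledFlow{\variableTime}{\dynamicMapping}{\state\subZero}{\control(\cdot)}
\preceq_\cone
\controlledFlow{\variableTime}{\dynamicMapping_\phi}{\state\subZero}{\control(\cdot)}
\eqfinp
\end{equation*}
Writing the difference as an element $k(\variableTime) \in \cone$, the pair $\bp{ \controlledFlow{\variableTime}{\dynamicMapping_\phi}{\state\subZero}{\control(\cdot)}, \control(\variableTime) }$ equals $\bp{ \controlledFlow{\variableTime}{\dynamicMapping}{\state\subZero}{\control(\cdot)}, \control(\variableTime) } + (k(\variableTime),0) \in \targetSet + \bp{\cone \times \{0\}} = \targetSet_\cone$, so $\control(\cdot)$ itself witnesses membership $\state\subZero \in \viabilityKernel(\dynamicMapping_\phi, \targetSet_\cone)$.

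For the reverse inclusion under assumption~\eqref{eq:equality_of_viability_kernels_assumption}, I would take $\state\subZero \in \viabilityKernel(\dynamicMapping_\phi,\targetSet_\cone)$ with associated viable control $\control(\cdot)$, and propose the reduced control $\control_\phi(\cdot) = \phi \circ \control(\cdot)$, which is measurable, as the witness for $\state\subZero \in \viabilityKernel(\dynamicMapping,\targetSet)$. For each $\variableTime$, since $\bp{ \controlledFlow{\variableTime}{\dynamicMapping_\phi}{\state\subZero}{\control(\cdot)}, \control(\variableTime) } \in \targetSet_\cone$, I can pick a pointwise decomposition $\controlledFlow{\variableTime}{\dynamicMapping_\phi}{\state\subZero}{\control(\cdot)} = \hat{\state}(\variableTime) + k(\variableTime)$ with $\bp{\hat{\state}(\variableTime), \control(\variableTime)} \in \targetSet$ and $k(\variableTime) \in \cone$. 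Then hypothesis~\eqref{eq:equality_of_viability_kernels_assumption} yields $\bp{\hat{\state}(\variableTime) + k(\variableTime), \phi\np{\control(\variableTime)}} \in \targetSet$, and using the identity once more this is precisely $\bp{\controlledFlow{\variableTime}{\dynamicMapping}{\state\subZero}{\control_\phi(\cdot)}, \control_\phi(\variableTime)} \in \targetSet$, as required.

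I expect the only subtle point to be reassuring oneself that no measurable selection of $(\hat{\state}(\variableTime), k(\variableTime))$ is needed in the equality direction: the decomposition is used only to invoke the pointwise set inclusion~\eqref{eq:equality_of_viability_kernels_assumption}, not to re-integrate a new trajectory, so a bare pointwise choice suffices. The rest is bookkeeping: verifying that $\control_\phi(\cdot)$ lies in $\ControlPaths$ (measurable composition) and that the flows involved are globally defined, both of which are built into the hypotheses.
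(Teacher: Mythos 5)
Your proposal is correct and follows essentially the same route as the paper's own proof: the same flow identity $\controlledFlow{\variableTime}{\dynamicMapping}{\state\subZero}{\control_\phi(\cdot)} = \controlledFlow{\variableTime}{\dynamicMapping_\phi}{\state\subZero}{\control(\cdot)}$, the same invocation of Proposition~\ref{pr:comparisonControlled} with $\statebis\subZero=\state\subZero$ for the inclusion, and the same pointwise decomposition in $\targetSet_\cone$ combined with hypothesis~\eqref{eq:equality_of_viability_kernels_assumption} for the reverse inclusion. Your closing remark that no measurable selection is needed is a correct observation the paper leaves implicit.
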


\begin{proof} 
It is easily checked that the mapping~\( \dynamicMapping_\phi \)
in~\eqref{eq:reduced_controlled_dynamics} indeed is 
a controlled dynamics as in Definition~\ref{de:controlled dynamics}.
Moreover, it is immediate, 
from definition~\eqref{eq:reduced_control_path} of 
the reduced control path \( \control_\phi(\cdot) \) and 
from definition~\eqref{eq:flow} of the flow, that 
\begin{equation}
\controlledFlow{\variableTime}{\dynamicMapping}{\state\subZero}{\control_\phi(\cdot)}
=
\controlledFlow{\variableTime}{\dynamicMapping_\phi}{\state\subZero}{\control(\cdot)}
\eqsepv \forall \np{\variableTime,\state\subZero} \in \DomainTime\times\STATE
%\eqsepv \forall (\state\subZero,\variableTime)\in\STATE\times\DomainTime
\eqfinp 
\label{eq:equality_of_flows}
\end{equation}
\medskip

\noindent $\bullet$
First, we prove the inclusion~\eqref{eq:inclusion_of_viability_kernels}, 
that is, $\viabilityKernel(\dynamicMapping,\targetSet) \subset
\viabilityKernel(\dynamicMapping_\phi,\targetSet_\cone)$. 
For this purpose, we consider 
$\state\subZero \in \viabilityKernel(\dynamicMapping,\targetSet)$,
and we show that $\state\subZero \in 
\viabilityKernel(\dynamicMapping_\phi,\targetSet_\cone)$. 

By definition~\eqref{eq:viability_kernel} of the 
viability kernel~\( \viabilityKernel(\dynamicMapping,\targetSet) \),
 there exists a control path $\control(\cdot) \in \ControlPaths$ in~\eqref{eq:control_paths} such that 
\[
    \bp{
      \controlledFlow{\variableTime}{\dynamicMapping}{\state\subZero}{\control(\cdot)},
\control(t) } 
    \in \targetSet\eqsepv \forall \variableTime \in \DomainTime 
\eqfinp
\]
As the assumptions of Proposition~\ref{pr:comparisonControlled} are
satisfied, Equation~\eqref{eq:monotonicityControlledFlow} gives
\[ 
\controlledFlow{\variableTime}{\dynamicMapping}{\state\subZero}{\control(\cdot)} 
\preceq_\cone 
\controlledFlow{\variableTime}{\dynamicMapping}{\state\subZero}{\control_\phi(\cdot)}
\eqsepv \forall \variableTime \in \DomainTime 
\eqfinp 
\]
%\), \( \forall \variableTime \in \DomainTime \).
Thus, from~\eqref{eq:equality_of_flows}, we deduce that
\begin{equation}
\label{eq:monotonicityControlledFlowPhi}
    \controlledFlow{\variableTime}{\dynamicMapping}{\state\subZero}{\control(\cdot)}
    \preceq_\cone
    \controlledFlow{\variableTime}{\dynamicMapping_\phi}{\state\subZero}{\control(\cdot)}
    \eqsepv \forall \variableTime \in \DomainTime 
\eqfinp 
\end{equation}
Then, we write 
\[
  \controlledFlow{\variableTime}{\dynamicMapping_\phi}{\state\subZero}{\control(\cdot)}    
=
    \controlledFlow{\variableTime}{\dynamicMapping}{\state\subZero}{\control(\cdot)}
    +
\overbrace{ \bp{
    \controlledFlow{\variableTime}{\dynamicMapping_\phi}{\state\subZero}{\control(\cdot)}
      -
      \controlledFlow{\variableTime}{\dynamicMapping}{\state\subZero}{\control(\cdot)}
    } }^{\in \cone }
\eqfinv 
\]
where the second term \(\bp{
    \controlledFlow{\variableTime}{\dynamicMapping_\phi}{\state\subZero}{\control(\cdot)}
      -
      \controlledFlow{\variableTime}{\dynamicMapping}{\state\subZero}{\control(\cdot)}}\) 
belongs to~\( \cone \) by~\eqref{eq:monotonicityControlledFlowPhi}
and by the definition~\eqref{eq:K-order} of the preorder~$\preceq_\cone$.
Therefore, from definition~\eqref{eq:newTargetSet} of 
\( \targetSet_\cone = \targetSet + \bp{\cone \times \{0\}} \), we
deduce that, for all $\variableTime \in \DomainTime$,
\[
  \bp{\controlledFlow{\variableTime}{\dynamicMapping_\phi}{\state\subZero}{\control(\cdot)},
    \control(\variableTime)} 
=
  \underbrace{ \bp{
  \controlledFlow{\variableTime}{\dynamicMapping}{\state\subZero}{\control(\cdot)},
    \control(\variableTime)} }_{\in \targetSet}   
+ 
  \underbrace{ \bp{
    \controlledFlow{\variableTime}{\dynamicMapping_\phi}{\state\subZero}{\control(\cdot)}
      -
      \controlledFlow{\variableTime}{\dynamicMapping}{\state\subZero}{\control(\cdot)}, 0} }_{\in \cone \times \{0\}}
\in \targetSet_\cone 
\eqfinp
  \]
This implies that $\state\subZero \in
\viabilityKernel(\dynamicMapping_\phi,\targetSet_\cone)$,
hence the first part of the proof is complete.
\medskip

\noindent $\bullet$
Second, we suppose that~\eqref{eq:equality_of_viability_kernels_assumption}
holds true and we
prove the equality~\eqref{eq:equality_of_viability_kernels}, that is, 
\( \viabilityKernel(\dynamicMapping,\targetSet) = 
\viabilityKernel(\dynamicMapping_\phi,\targetSet) \). 
By the just proven inclusion~\eqref{eq:inclusion_of_viability_kernels}, 
it suffices to show the reverse inclusion, that is, 
 \(\viabilityKernel(\dynamicMapping_\phi,\targetSet_\cone) \subset 
\viabilityKernel(\dynamicMapping,\targetSet)\). 
For this purpose, we consider 
\( \state\subZero \in \viabilityKernel(\dynamicMapping_\phi,\targetSet_{\cone}) \)
and we show that \( \state\subZero \in 
\viabilityKernel(\dynamicMapping,\targetSet)\). 

By definition~\eqref{eq:viability_kernel} of the 
viability kernel~\( \viabilityKernel(\dynamicMapping_\phi,\targetSet_{\cone}) \),
there exists a control path $\control(\cdot) \in \ControlPaths$ in~\eqref{eq:control_paths} such that 
\(
    \bp{
      \controlledFlow{\variableTime}{\dynamicMapping_\phi}{\state\subZero}{\control(\cdot)},
\control(t) } 
    \in \targetSet_{\cone}\), \( \forall \variableTime \in \DomainTime \).
From~\eqref{eq:equality_of_flows}, we deduce that
\[
\bp{ \controlledFlow{\variableTime}{\dynamicMapping}{\state\subZero}{\control_\phi(\cdot)},
\control(t) }
\in \targetSet_{\cone} \eqsepv \forall \variableTime \in \DomainTime 
\eqfinp 
\]
Now, by definition~\eqref{eq:newTargetSet} of~\( \targetSet_{\cone} \), 
for all \( \variableTime \in \DomainTime \)
there exist \( v_t \in \RR^n \) and \( w_t \in \RR^n \) such that
\begin{equation}
\controlledFlow{\variableTime}{\dynamicMapping}{\state\subZero}{\control_\phi(\cdot)}=v_t+w_t 
\eqsepv \bp{v_t,\control(t)} \in \targetSet
\eqsepv  w_t \in \cone 
\eqfinp  
\label{eq:vtandwt}
\end{equation}
We now show that the control path \( \control_\phi(\cdot) \)
in~\eqref{eq:reduced_control_path}
maintains the state and control 
\( \bp{ \controlledFlow{\variableTime}{\dynamicMapping}{\state\subZero}{\control_\phi(\cdot)},
\control_\phi(t) } \) in \( \targetSet\). Indeed, we have
\begin{align*}
\bp{ \controlledFlow{\variableTime}{\dynamicMapping}{\state\subZero}{\control_\phi(\cdot)},
\control_\phi(t) } 
&=
\bp{  \controlledFlow{\variableTime}{\dynamicMapping}{\state\subZero}{\control_\phi(\cdot)},
\phi(\control(t)) }
\tag{by definition~\eqref{eq:reduced_control_path} of \( \control_\phi(\cdot) \)}
\\
&=
   \bp{ v_t+w_t,\phi(\control(t))} 
\tag{by definition~\eqref{eq:vtandwt} of~$v_t$ and $w_t$}
\\
&\in 
\bigcup_{(\state',\control') \in \targetSet} (\state'+\cone)\times \phi(\control') 
\tag{as \((v_t,\control(t)) \in \targetSet\) and \( w_t \in \cone \) by~\eqref{eq:vtandwt}}
\\
&\in
\targetSet
\tag{by~\eqref{eq:equality_of_viability_kernels_assumption}} \eqfinp
\end{align*}
By definition~\eqref{eq:viability_kernel} of the 
viability kernel~\( \viabilityKernel(\dynamicMapping,\targetSet) \),
we conclude that  \(\state\subZero \in
\viabilityKernel(\dynamicMapping,\targetSet)\).
\medskip

This ends the proof.
\end{proof}

\section{Application to viable control of the Wolbachia bacterium}
\label{Application_to_viable_control_of_the_Wolbachia_bacterium}
% {sec:results}

In this Section, we apply the result established in
Theorem~\ref{theo:viability2} 
to a problem related to epidemic control 
by means of biocontrol of the mosquito dengue vector.

%\subsection*{Wolbachia  model}

The mosquito species \emph{Aedes aegypti} is the main transmitter of dengue.
When these mosquitoes are   infected with \emph{Wolbachia} bacterium, they
become less capable of transmitting the dengue virus to human hosts. Thanks to this
discovery, Wolbachia-based biocontrol is accepted as an ecologically
friendly and potentially cost-effective method for prevention and control of
dengue and other arboviral infections. 
We introduce now a model borrowed from \cite{bliman:hal-01579477}  representing the dynamics of a mosquito population infected
with Wolbachia. This model is described by four state variables 
\begin{equation*}
\state=(\NonWLarva,\NonWAdult,\WolbLarva,\WolbAdult) \in \RR^4
\eqfinv
\end{equation*}
where \(\NonWLarva\) and \(\NonWAdult\) represent the  uninfected mosquitoes
abundances (larva and adults respectively), 
whereas \(\WolbLarva\) and \(\WolbAdult\)  are
the infected (with Wolbachia) mosquitoes abundances (larva and adults
respectively). 
The population dynamics model is described by the following system
of differential equations
\begin{subequations}
\label{eq:WolbachiaNoControl}
\begin{align}
\dot \NonWLarva 
& = 
\alpha_U \NonWAdult
  \frac{\NonWAdult}{\NonWAdult+\WolbAdult} - \nu \NonWLarva - \mu\left(1 +
  k\left(\NonWLarva+\WolbLarva \right) \right)\NonWLarva %= F_L(\state) 
\eqfinv   
\\
\dot \NonWAdult
&= 
  \nu \NonWLarva - \mu_U \NonWAdult %=F_A(\state) 
\eqfinv   
\\
\dot \WolbLarva
&= 
\alpha_W \WolbAdult - \nu
  \WolbLarva - \mu\left(1 + k\left(\NonWLarva+\WolbLarva \right)
  \right)\WolbLarva %=G_L(\state) 
\eqfinv   
\\
\dot \WolbAdult
&= 
\nu \WolbLarva - \mu_W \WolbAdult %=G_A(\state) 
\eqfinv   
\end{align}
\end{subequations}
where all parameters are assumed to be positive \cite{bliman:hal-01579477}.   

% \subsection*{Controlled system}

In biocontrol, one can choose the quantity of mosquitoes infected with Wolbachia
larvae to be introduced \cite{Cali:2020}. 
This is why,  in the context of the model~\eqref{eq:WolbachiaNoControl}, we consider the
 control variable 
 \begin{equation*}
\control \in \CONTROL = [0,\control^{\sharp}] \subset \RR 
\eqfinv
 \end{equation*}
where \( \control^{\sharp} > 0 \) is the maximal quantity of mosquitoes infected with Wolbachia
larvae that can be introduced.
Then, by~\eqref{eq:WolbachiaNoControl},
we obtain a controlled dynamics which reads as~\eqref{eq:controlledModel}
with 
 \begin{equation}
\dynamicMapping(\state,\control)=
\bp{F_L(\state), F_A(\state),  G_L(\state) + \control, G_A(\state)}
\eqsepv \forall \state \in \STATE=\RR_+^4 
\eqsepv \forall \control \in \CONTROL 
\eqfinv
\label{eq:WolbachiaControlSys}
 \end{equation} 
where 
\begin{subequations}
\label{eq:WolbachiaNoControl_bis}
\begin{align}
%F_L(\state) =
F_L(\NonWLarva,\NonWAdult,\WolbLarva,\WolbAdult)
& = \label{eq:WolbachiaNoControl_bis_F_L}\\
& \alpha_U \NonWAdult
  \frac{\NonWAdult}{\NonWAdult+\WolbAdult} - \nu \NonWLarva - \mu\left(1 +
  k\left(\NonWLarva+\WolbLarva \right) \right)\NonWLarva 
\eqfinv   
\nonumber 
\\
%F_A(\state) =
F_A(\NonWLarva,\NonWAdult,\WolbLarva,\WolbAdult)
&= 
  \nu \NonWLarva - \mu_U \NonWAdult 
\eqfinv   
\\
%G_L(\state) =
G_L(\NonWLarva,\NonWAdult,\WolbLarva,\WolbAdult)
&= 
\alpha_W \WolbAdult - \nu
  \WolbLarva - \mu\left(1 + k\left(\NonWLarva+\WolbLarva \right)
  \right)\WolbLarva 
\eqfinv   
\\
%G_A(\state) =
G_A(\NonWLarva,\NonWAdult,\WolbLarva,\WolbAdult)
&= 
\nu \WolbLarva - \mu_W \WolbAdult 
\eqfinp  
\end{align}
\end{subequations}
By~\eqref{eq:WolbachiaControlSys} and~\eqref{eq:WolbachiaNoControl_bis},
the mapping~\(\dynamicMapping\) is well defined on \(\STATE=\RR_+^4\),
except for points where \(\NonWAdult=\WolbAdult=0\).
But, from the expression~\eqref{eq:WolbachiaNoControl_bis_F_L} of the first component~\(F_L \)
of \(\dynamicMapping(\cdot,\control)\), 
the mapping~\(\dynamicMapping\) can be defined in such points by continuity.

%\subsection*{Estimation of the viability kernel}

We take the stand that one of the objectives of biocontrol is to keep the
population of infected mosquitoes with Wolbachia above some thresholds
(see \cite{bliman:hal-01579477,Cali:2020} and the references therein).  
In this context, we consider positive upper population levels
$(\overline{\WolbLarva},\overline{\WolbAdult})$ 
and positive lower population levels
$(\underline{\NonWLarva},\underline{\NonWAdult})$. 
Our aim is to have the (Wolbachia) infected population of mosquitoes to
be above both $\overline{\WolbAdult},\overline{\WolbLarva}$, and the 
uninfected population to be below both
$\underline{\NonWLarva},\underline{\NonWAdult}$, permanently. 
Thus, we define the following desirable set 
\begin{equation}
\begin{split}
\targetSet = &
\left\{(\NonWLarva,\NonWAdult,\WolbLarva,\WolbAdult,\control)\in
\RR_+ \times \RR_+ \times \RR_+ \times \RR_+ \times
[0,\control^{\sharp}] \mid \right.
\\ 
& \left. \NonWLarva \leq \underline{\NonWLarva}, \NonWAdult \leq 
\underline{\NonWAdult}, \WolbLarva \geq \overline{\WolbLarva}, \WolbAdult \geq 
\overline{\WolbAdult} \right\} 
\eqfinp
\end{split}
\label{eq:WolbachiaTargetSet}
\end{equation}

 \begin{proposition}
Let the controlled dynamics mapping~\( \dynamicMapping^{\sharp}:\RR_+^4 \times [0,\control^{\sharp}] \to \RR^{4} \) 
be defined from the controlled dynamics~\eqref{eq:WolbachiaControlSys} by 
\begin{equation}
\dynamicMapping^{\sharp}(\state,\control)=
\dynamicMapping(\state,\control^{\sharp})
\eqsepv \forall \np{\state,\control} \in \RR_+^4 \times [0,\control^{\sharp}] 
\eqfinp
\label{eq:reduced_WolbachiaControlSys}
\end{equation}
Then, the viability kernels associated with the desirable set~$\targetSet$ 
and with either~$\dynamicMapping$ or
$\dynamicMapping^{\sharp}$ for the controlled dynamics coincide, that is, 
\begin{equation}
    \mathbb{V}(\dynamicMapping,\targetSet) = 
 \mathbb{V}(\dynamicMapping^{\sharp},\targetSet) 
\eqfinp
\label{eq:resultW}
\end{equation}
\label{pr:AssumptionWolbachia}
 \end{proposition}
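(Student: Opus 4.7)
The plan is to apply Theorem~\ref{theo:viability2} with the closed convex cone
$\cone = (-\RR_+) \times (-\RR_+) \times \RR_+ \times \RR_+ \subsetneq \RR^4$,
whose induced preorder captures the improving directions in $\targetSet$ (fewer uninfected and more Wolbachia-infected mosquitoes), together with the constant reduction mapping $\phi:[0,\control^{\sharp}] \to [0,\control^{\sharp}]$, $\phi(\control) = \control^{\sharp}$. Under these choices, the reduced controlled dynamics $\dynamicMapping_\phi$ of~\eqref{eq:reduced_controlled_dynamics} coincides with $\dynamicMapping^{\sharp}$.

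To verify the $(\cone,\phi)$-quasimonotone reducibility of Definition~\ref{de:quasimonotone_controlled_dynamics}, condition~\ref{it:quasimonotone_controlled_dynamics_b} is immediate: the control enters $\dynamicMapping$ additively only in the third component, so
$\dynamicMapping(\state,\control^{\sharp})-\dynamicMapping(\state,\control)=(0,0,\control^{\sharp}-\control,0) \in \cone$
for every $(\state,\control) \in \RR_+^4 \times [0,\control^{\sharp}]$. For condition~\ref{it:quasimonotone_controlled_dynamics_a}, since $\cone$ is the orthant with sign pattern $(m_1,m_2,m_3,m_4) = (1,1,0,0)$, I would invoke Proposition~\ref{prop:orthants} and check the twelve inequalities $(-1)^{m_i+m_j} \partial \dynamicMapping_i/\partial \state_j \geq 0$ for $i \neq j$ on $\RR_+^4$. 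A direct computation confirms this: the nontrivial off-diagonal partials $\partial F_L/\partial \state_2 = \alpha_U \state_2(\state_2+2\state_4)/(\state_2+\state_4)^2$, $\partial F_L/\partial \state_4 = -\alpha_U \state_2^2/(\state_2+\state_4)^2$, $\partial F_L/\partial \state_3 = -\mu k\state_1$, $\partial G_L/\partial \state_1 = -\mu k\state_3$, $\partial G_L/\partial \state_4 = \alpha_W$, $\partial F_A/\partial \state_1 = \partial G_A/\partial \state_3 = \nu$ each carry the sign prescribed by its prefactor, and the remaining off-diagonal partials vanish.

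With both conditions in force, the inclusion part of Theorem~\ref{theo:viability2} gives $\viabilityKernel(\dynamicMapping,\targetSet) \subset \viabilityKernel(\dynamicMapping^{\sharp},\targetSet_\cone)$. Next I would observe that $\targetSet_\cone \cap (\STATE \times \CONTROL) = \targetSet$: adding any $(-a,-b,c,d) \in \cone$ (with $a,b,c,d \geq 0$) to a state of $\targetSet$ preserves the four threshold inequalities, while the extra requirement of lying in $\RR_+^4$ forces the first two components to remain non-negative, recovering all constraints of $\targetSet$. Since trajectories of $\dynamicMapping^{\sharp}$ remain in $\STATE = \RR_+^4$ by the global-flow hypothesis, we obtain $\viabilityKernel(\dynamicMapping^{\sharp},\targetSet_\cone) = \viabilityKernel(\dynamicMapping^{\sharp},\targetSet)$, whence $\viabilityKernel(\dynamicMapping,\targetSet) \subset \viabilityKernel(\dynamicMapping^{\sharp},\targetSet)$. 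The reverse inclusion is elementary: because $\dynamicMapping^{\sharp}(\state,\control) = \dynamicMapping(\state,\control^{\sharp})$ does not depend on $\control$, any $\dynamicMapping^{\sharp}$-trajectory from $\state_0$ also solves $\dynamicMapping$ under the constant control $\hat\control(\variableTime) \equiv \control^{\sharp}$, and pairing the state with $\hat\control$ preserves $\targetSet$-membership since $\control^{\sharp} \in [0,\control^{\sharp}]$.

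The main subtlety is that the equality hypothesis~\eqref{eq:equality_of_viability_kernels_assumption} of Theorem~\ref{theo:viability2} does not hold with this $\cone$: its negative entries in positions 1 and 2 make $\targetSet + (\cone\times\{0\})$ spill out of $\RR_+^4$, so the equality $\viabilityKernel(\dynamicMapping,\targetSet) = \viabilityKernel(\dynamicMapping^{\sharp},\targetSet_\cone)$ cannot be read off directly from the theorem. Recovering~\eqref{eq:resultW} therefore requires the two supplementary observations made above: the invariance-based identification $\targetSet_\cone \cap (\STATE \times \CONTROL) = \targetSet$ on one side, and the control-independence of $\dynamicMapping^{\sharp}$ on the other.
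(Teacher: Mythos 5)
Your proof is correct, and it follows the paper's strategy for the bulk of the argument: same cone $\cone=\RR_-\times\RR_-\times\RR_+\times\RR_+$, same constant reduction $\phi\equiv\control^{\sharp}$, the same twelve sign checks via Proposition~\ref{prop:orthants}, and the identification $\dynamicMapping_\phi=\dynamicMapping^{\sharp}$. Where you genuinely diverge is in the endgame. The paper writes $\targetSet$ as $\bp{\np{\underline{\NonWLarva},\underline{\NonWAdult},\overline{\WolbLarva},\overline{\WolbAdult}}+\cone}\times[0,\control^{\sharp}]$, from which $\targetSet_\cone=\targetSet$ and condition~\eqref{eq:equality_of_viability_kernels_assumption} follow at once from $\cone+\cone=\cone$, so the equality clause of Theorem~\ref{theo:viability2} applies directly. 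That identification silently drops the nonnegativity constraints on $\NonWLarva,\NonWAdult$ that are present in the literal definition~\eqref{eq:WolbachiaTargetSet} of $\targetSet\subset\RR_+^4\times[0,\control^{\sharp}]$; you are right that, read literally, $\targetSet+\np{\cone\times\{0\}}$ spills into negative first and second coordinates and~\eqref{eq:equality_of_viability_kernels_assumption} fails as stated. Your repair --- using only the inclusion clause of the theorem, the identity $\targetSet_\cone\cap\np{\STATE\times\CONTROL}=\targetSet$ combined with forward invariance of $\RR_+^4$, and the elementary reverse inclusion coming from the fact that $\dynamicMapping^{\sharp}$-trajectories are $\dynamicMapping$-trajectories under the constant control $\control^{\sharp}$ --- is sound and arguably more careful than the paper's shortcut; what it costs is that you must invoke invariance of the state space explicitly (this is not a formal consequence of ``generates a global flow'' alone, but it is exactly what the paper extracts from \cite[Theorem~1]{bliman:hal-01579477}, so you should cite that rather than the global-flow hypothesis). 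The paper's route buys brevity and a direct application of the equality statement~\eqref{eq:equality_of_viability_kernels}; yours makes explicit why the discrepancy between $\targetSet$ and its conic description is harmless. The only other omission relative to the paper is the preliminary verification that $\dynamicMapping$ is indeed a controlled dynamics generating a global flow on $\RR_+^4$ (local Lipschitz continuity uniform in $\control$, plus the invariance and global existence result of \cite{bliman:hal-01579477}), which you assume rather than check.
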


The advantage of the equality~\eqref{eq:resultW} 
over the definition~\eqref{eq:viability_kernel} of the 
viability kernel~\( \viabilityKernel(\dynamicMapping,\targetSet) \)
is that
\(\dynamicMapping^{\sharp}\) in~\eqref{eq:reduced_WolbachiaControlSys} is not really a
controlled dynamics, as it does not depend on the control~$\control$.
In other words, an initial condition
\(\state_{0}=(\NonWLarva,\NonWAdult,\WolbLarva,\WolbLarva)_{0}\)
belongs to the viability kernel  \(\mathbb{V}(\dynamicMapping,\targetSet)\) if
and only if, using the stationary control~\(\control^{\sharp}\) 
in the differential equation 
\( \dot{\state}(\variableTime) = 
\dynamicMapping\bp{\state(\variableTime),\control^{\sharp}} \), the state and
control \((\NonWLarva(t),\NonWAdult(t),\WolbLarva(t),\WolbAdult(t),\control^{\sharp})\)
lies in \(\targetSet\), defined in~\eqref{eq:WolbachiaTargetSet}, for all $t \in \DomainTime$. Hence, the problem has been
reduced to compute the viability kernel for a single constant control policy, instead of
a family of controls, which is a far more easier problem to handle than the 
original problem.

 \begin{proof}
 The proof consists in applying Theorem~\ref{theo:viability2}. 
In order to ensure that all assumptions are satisfied, we divide the proof in three parts.
  \medskip
 
\noindent $\bullet$ 
First, we prove that the mapping~\(\dynamicMapping\) given
by~\eqref{eq:WolbachiaControlSys} is a controlled dynamics 
as in Definition~\ref{de:controlled dynamics}.
Indeed, on the one hand, by~\eqref{eq:WolbachiaControlSys} and~\eqref{eq:WolbachiaNoControl_bis},
it is straightforward that \(\dynamicMapping(\cdot,\control)\) is locally
Lipschitz on \(\STATE=\RR_+^4\), with Lipschitz constant independent of~\( \control\). 
On the other hand, it is proved in \cite[Theorem 1]{bliman:hal-01579477} that,
for all initial condition with nonnegative  components
\(\state_0 \in \RR^4_{+}\), the solutions of the controlled system
\( \dot{\state}(\variableTime) = 
\dynamicMapping\bp{\state(\variableTime),\control(\variableTime)} \),
where \( \state(0)=\state_0 \in \RR^4_{+}\) and where 
\( \control\np{\cdot}:\DomainTime\to [0,\control^{\sharp}] \)
is a measurable control path, remain in \(\RR^4_{+}\),
and that the solution is defined for all time~\( t \in [0,+\infty) \).
 \medskip
 
\noindent $\bullet$ 
Second, we show that the controlled dynamics \(\dynamicMapping\)  
is \( \np{\cone,\phi} \)-quasimonotone reducible according to Definition~\ref{de:quasimonotone_controlled_dynamics}, 
for a suitable cone $\cone \subset \RR^n$ and mapping \( \phi :
[0,\control^{\sharp}] \to [0,\control^{\sharp}] \).

On the one hand, we define the cone
\begin{equation}
\cone = \RR_- \times \RR_- \times \RR_+ \times \RR_+
\eqfinv 
\label{eq:WolbachiaCone}
\end{equation}
and the associated preorder given by, for any two vectors $\state
= (\state_1,\state_2,\state_3,\state_4)$, $\statebis =
(\statebis_1,\statebis_2,\statebis_3,\statebis_4)$,
$\state \preceq_\cone \statebis$ if and only if $\state_1 \geq \statebis_1$,
$\state_2 \geq \statebis_2$, $\state_3 \leq \statebis_3$,
$\state_4 \leq \statebis_4$. 
As the cone \(\cone\) in~\eqref{eq:WolbachiaCone} is one of the orthants of $\RR^4$,
we deduce from Proposition~\ref{prop:orthants} that,
for any measurable control path
\( \control\np{\cdot}:\DomainTime\to [0,\control^{\sharp}] \),
the mapping
${\dynamicMappingTer}_{\control\np{\cdot}}(\state,\variableTime) 
= \dynamicMapping(\state,\control(\variableTime))$ 
is $\cone$-quasimonotone if and only if
\begin{itemize}
    \item[(a)] $\frac{\partial F_L}{\partial \NonWAdult} \geq 0$, $\frac{\partial F_L}{\partial \WolbLarva} \leq 0$, $\frac{\partial F_L}{\partial \WolbAdult} \leq 0 \eqfinv$
    \item[(b)]  $\frac{\partial F_A}{\partial \NonWLarva} \geq 0$, $\frac{\partial F_A}{\partial \WolbLarva} \leq 0$, $\frac{\partial F_A}{\partial \WolbAdult} \leq 0 \eqfinv$
    \item[(c)]  $\frac{\partial G_L}{\partial \WolbAdult} \geq 0$, $\frac{\partial G_L}{\partial \NonWLarva} \leq 0$, $\frac{\partial G_L}{\partial \NonWAdult} \leq 0 \eqfinv$
    \item[(d)]  $\frac{\partial G_A}{\partial \WolbLarva} \geq 0$, $\frac{\partial G_A}{\partial \NonWLarva} \leq 0$, $\frac{\partial G_A}{\partial \NonWAdult} \leq 0 \eqfinp$
\end{itemize}
Now, these inequalities can easily be verified for the functions~\(F_L \), \(F_A \),  \(G_L \),
and \(G_A \) defined in~\eqref{eq:WolbachiaNoControl_bis}. 
Therefore, the controlled dynamics~\(\dynamicMapping\)
in~\eqref{eq:WolbachiaControlSys} 
satisfies assumption~\ref{it:quasimonotone_controlled_dynamics_a} in
Definition~\ref{de:quasimonotone_controlled_dynamics}.

On the other hand, we define the mapping 
$\phi:[0,\control^{\sharp}]\to[0,\control^{\sharp}]$ by
\begin{equation}
\phi(\control) = \control^{\sharp} \eqsepv
\forall \control \in [0,\control^{\sharp}]
\eqfinp
\label{eq:Wolbachia-reduction}  
\end{equation}
% by \( \phi(\control) = \control^{\sharp} \) for all \(\control \in [0,\control^{\sharp}] \).
Then, we observe that, by~\eqref{eq:WolbachiaControlSys}, one has,
for all \( \control \in  [0,\control^{\sharp}] \), 
\[
 \dynamicMapping\bp{\state,\phi(\control)}-\dynamicMapping(\state,\control) 
% = \dynamicMapping(\state,\control^{\sharp})-\dynamicMapping(\state,\control)
= \np{0,0,\control^{\sharp}-\control,0}
\in \RR_- \times \RR_- \times \RR_+ \times \RR_+ =\cone 
\eqfinp
\]
By definition~\eqref{eq:K-order} of the preorder~$\preceq_\cone$
and by expression~\eqref{eq:WolbachiaCone} of the cone~\(\cone\),
we get that
\begin{equation*}
	\dynamicMapping(\state,\control) \preceq_\cone 
\dynamicMapping\bp{\state,\phi(\control)} = 
\dynamicMapping(\state,\control^{\sharp}) 
\eqsepv \forall \np{\state,\control} \in \RR_+^4 \times [0,\control^{\sharp}]
\eqfinp
\end{equation*}
Thus, the mapping~$\phi$ in~\eqref{eq:Wolbachia-reduction} 
is a $\cone$-reduction for the controlled dynamics~\(\dynamicMapping\), 
and condition~\ref{it:quasimonotone_controlled_dynamics_b} in
Definition~\ref{de:quasimonotone_controlled_dynamics} is satisfied. 
 \medskip
 
\noindent $\bullet$
Third, we prove~\eqref{eq:resultW}.

On the one hand, 
the new reduced controlled dynamics~\eqref{eq:reduced_controlled_dynamics}
satisfies \(\dynamicMapping_\phi= \dynamicMapping^{\sharp}\),
because of the expression~\eqref{eq:reduced_WolbachiaControlSys}
of~\(\dynamicMapping^{\sharp}\)
and by \( \phi(\control) = \control^{\sharp} \)
in~\eqref{eq:Wolbachia-reduction}.
On the other hand, the desirable set~\(\targetSet\)
in~\eqref{eq:WolbachiaTargetSet} has the expression 
\begin{equation}
 \targetSet = \bp{ \np{ \underline{\NonWLarva}, \underline{\NonWAdult}, 
\overline{\WolbLarva}, \overline{\WolbAdult} } + \cone } \times [0,\control^{\sharp}] 
\eqfinp 
\label{eq:WolbachiaTargetSet_bis}  
\end{equation}
We deduce that the new extended desirable set 
in~\eqref{eq:newTargetSet} satisfies 
\[
\targetSet_\cone = \targetSet + (\cone \times \{0\})=
\bp{ \np{ \underline{\NonWLarva}, \underline{\NonWAdult}, 
\overline{\WolbLarva}, \overline{\WolbAdult} } + \cone + \cone } \times 
\np{ [0,\control^{\sharp}] +0 } 
= \targetSet
\eqfinv 
\]
where we have used the property that \( \cone + \cone = \cone \),
as the cone~$\cone$ is convex and contains~$0$.
There remains to check that~\eqref{eq:equality_of_viability_kernels_assumption}
holds true. Now, by~\eqref{eq:Wolbachia-reduction}  and~\eqref{eq:WolbachiaTargetSet_bis}, we have 
\[
\bigcup_{(\state,\control) \in \targetSet} (\state+\cone)\times \phi(\control) 
=
% \bp{ \np{ \underline{\NonWLarva}, \underline{\NonWAdult}, 
% \overline{\WolbLarva}, \overline{\WolbAdult} } + \cone + \cone }
% \times \{ \phi\np{[0,\control^{\sharp}]} \}
% =
\bp{ \np{ \underline{\NonWLarva}, \underline{\NonWAdult}, 
\overline{\WolbLarva}, \overline{\WolbAdult} } + \cone }
\times \{ \control^{\sharp} \} \subset \targetSet
\eqfinp 
\]
Therefore, we apply Theorem~\ref{theo:viability2}
and we obtain that 
\begin{equation*}
    \mathbb{V}(\dynamicMapping,\targetSet) = 
\mathbb{V}(\dynamicMapping_\phi,\targetSet_\cone) = 
\mathbb{V}(\dynamicMapping_\phi,\targetSet) =\mathbb{V}(\dynamicMapping^\sharp,\targetSet) 
\eqfinp
\end{equation*}

This ends the proof.
 \end{proof}

%\section{Conclusion}

\paragraph{Acknowledgments.}
The authors thank the international program MATH AmSud
(\emph{MOVECO: Modeling, Optimization and Viability for Epidemics COntrol},
project 18-MATH-05)
% Fecha de inicio del proyecto: 18 de febrero de 2014
that offered financial support for missions, together with 
\'Ecole des Ponts ParisTech (France). 
The second author was partially supported by Fondecyt N~1200355 
and by Basal Program CMM-AFB 170001, both programs from ANID-Chile.   
The  third  author  was  also  funded  by  the  program Conicyt PFCHA/Doctorado Becas Chile/2017-21171813.
 
\appendix 

\section{Comparison lemma for flows}
\label{Appendix}

We prove a lemma regarding the comparison, via a conic preorder, of flows generated by two dynamics. 

\begin{lemma}
\label{lem:comparison}
Let \( \STATE \subset \RR^{n} \) be a closed subset of~\( \RR^{n} \),
and $\dynamicMappingBis, \dynamicMappingTer:
\STATE \times \DomainTime \to \RR^{n}$ be two mappings that are 
locally Lipschitz in the first variable and measurable in the second variable,
and such that the two differential equations 
\begin{equation*}
    \dot{\state} = \dynamicMappingBis(\state,\variableTime) 
\eqsepv 
   \dot{\state} = \dynamicMappingTer(\state,\variableTime) 
\eqsepv \state(0)=\state_0 
\end{equation*}
have unique solutions, for all time \( \variableTime \in \DomainTime \)
and for all state \( \state_0 \in \STATE \), denoted by 
\( \flow{\variableTime}{\dynamicMappingBis }{\state_0} \)
and
\( \flow{\variableTime}{\dynamicMappingTer}{\state_0} \).
The mappings \( \Flow{\dynamicMappingBis} \)
and \( \Flow{\dynamicMappingTer} \) are called \emph{flows}. 

Let $\cone \subsetneq \RR^{n}$ be a closed convex cone with nonempty interior.
Suppose that 
\begin{itemize}
\item 
one of the two mappings, either $\dynamicMappingBis$ or $\dynamicMappingTer$, is
$\cone$-quasimonotone (as in Definition~\ref{de:K-quasimonotone_dynamics}), 
\item 
we have that $\dynamicMappingBis(\state,\variableTime) \preceq_\cone \dynamicMappingTer(\state,\variableTime)$,
for all \( (\state,\variableTime)\in\STATE\times\DomainTime \).
\end{itemize}
Then, the two flows \( \Flow{\dynamicMappingBis} \)
and \( \Flow{\dynamicMappingTer} \) have the following property: 
\begin{equation}
  \begin{split}
\state\subZero,\statebis\subZero \in \STATE
\text{ and }
\state\subZero \preceq_\cone \statebis\subZero \implies \\
\flow{\variableTime}{\dynamicMappingBis }{\state\subZero} 
\preceq_\cone \flow{\variableTime}{\dynamicMappingTer}{\statebis\subZero}
\eqsepv \forall \variableTime \in \DomainTime 
\eqfinp
  \end{split}
\label{eq:monotone-flows}
\end{equation}
\end{lemma}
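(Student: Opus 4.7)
The plan is to fix $\state\subZero \preceq_\cone \statebis\subZero$ and define the ``gap'' $z(t) \defined \psi(t) - \phi(t)$, where $\phi(t) = \flow{t}{\dynamicMappingBis}{\state\subZero}$ and $\psi(t) = \flow{t}{\dynamicMappingTer}{\statebis\subZero}$. The conclusion~\eqref{eq:monotone-flows} then reduces to proving that $z(t) \in \cone$ for every $t \in \DomainTime$, which is an invariance statement for the closed convex cone~$\cone$ under the Carath\'eodory ODE
\begin{equation*}
\dot{z}(t) = \dynamicMappingTer\bp{\phi(t)+z(t),t} - \dynamicMappingBis\bp{\phi(t),t}
\eqsepv z(0) = \statebis\subZero - \state\subZero \in \cone
\eqfinp
\end{equation*}
Since $\cone$ is closed convex with nonempty interior, the bipolar identity gives $\cone = \{w \in \RR^n : \proscal{w}{\dual} \geq 0 \text{ for all } \dual \in \polarCone\}$, so invariance can be tested dually against each $\dual \in \polarCone$.

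Assume without loss of generality that $\dynamicMappingTer$ is the $\cone$-quasimonotone dynamics; the other case is symmetric after centering the splitting at $\psi(t)$ instead of at $\phi(t)$. To verify the pointwise tangential condition characterizing invariance of $\cone$, I pick $z \in \partial \cone$ and a supporting $\dual \in \polarCone$, that is, $\proscal{z}{\dual} = 0$, and split
\begin{equation*}
\proscal{\dot{z}}{\dual}
= \proscal{\dynamicMappingTer(\phi(t)+z,t) - \dynamicMappingTer(\phi(t),t)}{\dual}
+ \proscal{\dynamicMappingTer(\phi(t),t) - \dynamicMappingBis(\phi(t),t)}{\dual}
\eqfinp
\end{equation*}
The second term is nonnegative because $\dual \in \polarCone$ and $\dynamicMappingTer(\phi,t) - \dynamicMappingBis(\phi,t) \in \cone$ by the dominance hypothesis. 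For the first term, from $z \in \cone$ and $\proscal{z}{\dual} = 0$, one has $\phi(t) \preceq_{\cone \cap \{\dual\}\orthogonal} \phi(t)+z$, and the $\cone$-quasimonotonicity of $\dynamicMappingTer$ (Definition~\ref{de:K-quasimonotone_dynamics}) yields that the first term is also nonnegative. Hence $\proscal{\dot{z}}{\dual} \geq 0$ whenever $z$ touches $\partial \cone$ along the normal direction~$\dual$.

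To convert this pointwise tangential condition into a global invariance statement, I would invoke the Nagumo--Brezis invariance theorem for Carath\'eodory ODEs and closed convex sets, whose hypothesis is exactly the inequality verified above. A self-contained alternative is a perturbation-plus-exit-time argument: pick $e \in \interior(\cone)$, let $\psi_\epsilon$ solve $\dot{\psi}_\epsilon = \dynamicMappingTer(\psi_\epsilon,t) + \epsilon e$ with $\psi_\epsilon(0) = \statebis\subZero$, and set $z_\epsilon \defined \psi_\epsilon - \phi$; the tangential condition becomes strict, $\proscal{\dot{z}_\epsilon}{\dual} \geq \epsilon \proscal{e}{\dual} > 0$ for every $\dual \in \polarCone \setminus \{0\}$ with $\proscal{z_\epsilon}{\dual} = 0$, which rules out a first exit time from $\cone$ and allows passing $\epsilon \to 0$ by continuous dependence on parameters together with closedness of $\cone$. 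The main obstacle is making this exit-time step rigorous under merely measurable time dependence, since $\proscal{\dot{z}_\epsilon(t)}{\dual}$ is only a.e.-defined; this requires either careful absolute-continuity arguments on $t \mapsto \proscal{z_\epsilon(t)}{\dual}$ with strictness inherited from the $\epsilon e$ perturbation, or an outright appeal to the Carath\'eodory version of Nagumo's theorem.
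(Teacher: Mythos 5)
Your core computation is exactly the one in the paper's proof: test membership in~$\cone$ against dual vectors $\dual \in \polarCone$, split $\proscal{\dot z}{\dual}$ at a boundary\-/touching time into a quasimonotonicity term and a dominance term, make the inequalities strict by adding $\epsilon\anyVector$ with $\anyVector \in \interior\cone$ to the dominating dynamics, and pass to the limit $\epsilon \downarrow 0$ using continuous dependence on parameters and closedness of~$\cone$. Your remark that the splitting can be centered at $\phi(t)$ or at $\psi(t)$ according to which of $\dynamicMappingBis,\dynamicMappingTer$ is quasimonotone is also exactly how the paper accommodates the ``one of the two mappings'' hypothesis.

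There is, however, one concrete gap in your self-contained variant: you perturb only the dynamics and keep $\psi_\epsilon(0)=\statebis\subZero$, so the gap $z_\epsilon(0)=\statebis\subZero-\state\subZero$ may lie on $\partial\cone$ --- indeed $z_\epsilon(0)=0$ when $\state\subZero=\statebis\subZero$, which is precisely the case used when the Lemma feeds into Theorem~\ref{theo:viability2}. In that situation the ``first exit time from $\cone$'' argument does not get off the ground: if the putative exit time is $s=0$ there is no backward interval on which $\proscal{z_\epsilon(\cdot)}{\dual}$ was positive, and knowing $\proscal{\dot z_\epsilon(0)}{\dual}>0$ for the supporting functionals $\dual$ at $z_\epsilon(0)$ does not prevent $z_\epsilon(t)$ from leaving $\cone$ through another face for small $t>0$; a single dual direction controls one half-space, not the whole cone (this is why Nagumo's theorem is not a one-line exit-time argument). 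The paper's proof avoids this by perturbing the initial condition as well, $\psi_\epsilon(0)=\statebis\subZero+\epsilon\anyVector$, so that $z_\epsilon(0)\in\cone+\interior\cone\subset\interior\cone$; then the set of times where $z_\epsilon$ lies in $\interior\cone$ is open and contains $0$, the first boundary-touching time $s$ is strictly positive, on $[0,s)$ one has $\proscal{z_\epsilon(t)}{\dual}>0$ for \emph{every} $\dual\in\polarCone\setminus\{0\}$, and the backward-in-time inequality $\derivativeTime\proscal{z_\epsilon(t)}{\dual}\vert_{t=s}\le 0$ becomes available and produces the contradiction. Adding that initial perturbation closes your sketch. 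The measurability-in-time obstacle you flag is real, but it is not specific to your write-up: the paper's own proof also differentiates $\proscal{z(\cdot)}{\dual}$ at the particular instant $s$ without further justification, and your fallback of invoking a Carath\'eodory invariance theorem is a legitimate, if outsourced, way to finish.
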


This Lemma is a generalization of Theorem~1.1 in \cite{Hirsch2003}, 
where the implication~\eqref{eq:monotone-flows} is established in the particular
case where $\dynamicMappingBis = \dynamicMappingTer$
(when $\dynamicMappingBis = \dynamicMappingTer$, it is also proven in
\cite{Hirsch2003} that \eqref{eq:monotone-flows} is a sufficient 
condition for the $\cone$-quasimonotonicity of \(\dynamicMappingBis\)).
% Lemma  \ref{lem:comparison} is also an extension of the result presented in
% \cite{DeLara2016} where \eqref{eq:monotone-flows} is proved for the special cone
% $\cone = \positiveRealVectorial$.
%Our proof follows the same lines as in~\cite{DeLara2016}.

\begin{proof}
Observe that, for any initial conditions \(\state\subZero\) and
\(\statebis\subZero\) in~\( \STATE \), 
the solutions  \(\state(t)= \flow{\variableTime}{\dynamicMappingBis }{\state\subZero} 
\) and \(y(t)=\flow{\variableTime}{\dynamicMappingTer}{\statebis\subZero}\) 
%are absolutely continuous in $t$, and 
satisfy
\begin{equation*}\label{eq:flows_exp}
\state(t)= \state\subZero + \int_0^t \dynamicMappingBis(\state(s),s)ds \eqsepv
y(t)= \statebis\subZero + \int_0^t \dynamicMappingTer(y(s),s)ds 
\eqsepv \forall t \in \DomainTime 
\eqfinp
\end{equation*}
As the mappings $\dynamicMappingBis$ and $\dynamicMappingTer$ 
are locally Lipschitz in the first variable, 
we obtain that \(\flow{\cdot}{\dynamicMappingBis }{\cdot} \) 
and \(\flow{\cdot}{\dynamicMappingTer}{\cdot}\) are continuous
in the couple argument.

As the closed convex cone $\cone \subsetneq \RR^n$ 
has nonempty interior~\( \interior\cone \), 
we introduce the following notation
\begin{equation}
  \state \ll_\cone \statebis  \Leftrightarrow 
\statebis - \state \in \interior\cone 
\eqfinp
\label{eq:strict_preorder}
\end{equation}
The relation~\( \ll_\cone \) is transitive (as \( \interior\cone +
\interior\cone \subset \interior\cone \)), but not necessarily
reflexive (as $0$ may or may not be in~\( \interior\cone \)).
The following result is established in \cite[Proposition 3.1]{Hirsch2004}
\begin{equation}
 \state \in \interior\cone 
\Leftrightarrow 
\state \in \cone \text{ and }
\proscal{\state}{\dual} >0 \eqsepv \forall \dual \in \polarCone\backslash\{0\} 
\eqfinv
\label{eq:empty_interior}
\end{equation}
where the dual cone~\( \polarCone \) 
has been defined in~\eqref{eq:positive_polar_cone}.
As a consequence, if $\state \in 
\partial\cone = \cone\backslash\interior\cone$, 
then there exists an element $\dual \in
\polarCone\backslash\{0\}$ such that  $\proscal{\state}{\dual} = 0$
(indeed, \( \polarCone\backslash\{0\} \neq \emptyset \) because of the 
assumption that $\cone \subsetneq \RR^{n}$, hence $\cone \neq \RR^{n}$).
\medskip

We assume that $\dynamicMappingBis$ is $\cone$-quasimonotone. 
In the case where $\dynamicMappingTer$ is $\cone$-quasimonotone, 
the proof is the same. 
\medskip

\noindent $\bullet$
First, we prove that, if 
\(
\dynamicMappingBis(\state,\variableTime)\ll_\cone\dynamicMappingTer(\state,\variableTime)
\), 
$\forall (\state,\variableTime) \in \STATE\times\DomainTime$, then
\begin{equation}
\state\subZero \ll_\cone \statebis\subZero
\implies
\flow{\variableTime}{\dynamicMappingBis }{\state\subZero} 
\ll_\cone \flow{\variableTime}{\dynamicMappingTer}{\statebis\subZero}
\eqsepv \forall \variableTime \in \DomainTime 
\eqfinp
\label{eq:propertyDominatedMapping2}
\end{equation}
Indeed, let us assume that this is not the case. 
Then, there would exist initial conditions \( \state\subZero \)
and \( \statebis\subZero \) in~$\STATE$, and 
$\variableS\in\DomainTime$, $\variableS >0$, such that
\begin{equation*}
 \flow{\variableTime}{\dynamicMappingBis }{\state\subZero} 
\ll_\cone \flow{\variableTime}{\dynamicMappingTer}{\statebis\subZero}
\eqsepv 
\forall \variableTime \in [0,\variableS) 
\text{ and }
 \flow{\variableS}{\dynamicMappingTer}{\statebis\subZero}
\nll_\cone \flow{\variableS}{\dynamicMappingBis}{\state\subZero} 
\eqfinv
\label{eq:inclusion_s}
\end{equation*}
that is,
\( 
 \flow{\variableTime}{\dynamicMappingBis }{\state\subZero} 
-\flow{\variableTime}{\dynamicMappingTer}{\statebis\subZero}
\in \interior\cone \), 
\( \forall \variableTime \in [0,\variableS)  \), 
and 
\( \flow{\variableS}{\dynamicMappingTer}{\statebis\subZero}-
\flow{\variableS}{\dynamicMappingBis}{\state\subZero} 
\not\in \interior\cone \). 
Since \( \cone \) is closed and 
the flows are continuous in their two arguments,
we would deduce that \( \flow{\variableS}{\dynamicMappingTer}{\statebis\subZero}
-\flow{\variableS}{\dynamicMappingBis}{\state\subZero} 
\in \cone\backslash\interior\cone = \partial\cone \).
By~\eqref{eq:empty_interior}, there would exist $\dual \in \polarCone
\backslash \{0\}$
such that both
\( \proscal{\flow{\variableS}{\dynamicMappingTer}{\statebis\subZero}
-\flow{\variableS}{\dynamicMappingBis}{\state\subZero}}{\dual} =0 \),
and 
\( \proscal{\flow{\variableTime}{\dynamicMappingTer}{\statebis\subZero}
-\flow{\variableTime}{\dynamicMappingBis}{\state\subZero}}{\dual}>0 \),
for $0\leq\variableTime<\variableS$, giving thus
\[
\derivativeTime\proscal{\flow{\variableTime}{\dynamicMappingTer}{\statebis\subZero}
-\flow{\variableTime}{\dynamicMappingBis}{\state\subZero}}{\dual}\vert_{\variableTime=\variableS}
\leq 0 
\eqfinp
\]
From the definition of the flows, we would finally obtain that 
\begin{equation}\label{eq:flows2}
\proscal{\dynamicMappingTer(\flow{\variableS}{\dynamicMappingTer}{\statebis\subZero},\variableS)}{\dual}
\leq
\proscal{\dynamicMappingBis(\flow{\variableS}{\dynamicMappingBis}{\state\subZero},\variableS)}{\dual} 
\eqfinp
\end{equation}
As we have seen that 
\(\proscal{\flow{\variableS}{\dynamicMappingTer}{\statebis\subZero}
-\flow{\variableS}{\dynamicMappingBis}{\state\subZero}}{\dual}=0 \),
and 
\( \flow{\variableS}{\dynamicMappingTer}{\statebis\subZero}
-\flow{\variableS}{\dynamicMappingBis}{\state\subZero} \in \cone \),
we would deduce that 
\( \flow{\variableS}{\dynamicMappingTer}{\statebis\subZero}
-\flow{\variableS}{\dynamicMappingBis}{\state\subZero} 
\in \cone \cap \left\{\dual \right\}\orthogonal \),
that is, \\
\( \flow{\variableS}{\dynamicMappingBis}{\state\subZero} 
\preceq_{\cone\cap \left\{\dual \right\}\orthogonal} 
\flow{\variableS}{\dynamicMappingTer}{\statebis\subZero} \)
by definition~\eqref{eq:K-order} of the preorder~\( \preceq_{\cone\cap
  \left\{\dual \right\}\orthogonal} \).
Now, since $\dynamicMappingBis$ is $\cone$-quasimonotone, 
we would deduce from~\eqref{eq:quasimonotoneCondition} that 
\begin{equation} 
\label{eq:QMTraj1}
\proscal{\dynamicMappingBis(\flow{\variableS}{\dynamicMappingBis}{\state\subZero},\variableS)}{\dual} 
\leq 
\proscal{\dynamicMappingBis(\flow{\variableS}{\dynamicMappingTer}{\statebis\subZero},\variableS)}{\dual} 
\eqfinp
\end{equation}
Combining Inequalities~\eqref{eq:flows2} and \eqref{eq:QMTraj1} would give
\[
\proscal{\dynamicMappingTer(\flow{\variableS}{\dynamicMappingTer}{\statebis\subZero},\variableS)}{\dual}
\leq
\proscal{\dynamicMappingBis(\flow{\variableS}{\dynamicMappingTer}{\statebis\subZero},\variableS)}{\dual} 
\eqfinp
\]
Now, this would contradict the assumption that 
$\dynamicMappingBis(\state,\variableTime)\ll_\cone\dynamicMappingTer(\state,\variableTime)$,
$\forall (\state,\variableTime)\in\STATE\times\DomainTime$, which indeed
implies that 
\(
\dynamicMappingTer(\flow{\variableS}{\dynamicMappingTer}{\statebis\subZero},\variableS)
-\dynamicMappingBis(\flow{\variableS}{\dynamicMappingTer}{\statebis\subZero},\variableS)
\in \interior\cone \), and, by~\eqref{eq:empty_interior}, that 
\[
\proscal{\dynamicMappingBis(\flow{\variableS}{\dynamicMappingTer}{\statebis\subZero},\variableS)}{\dual}
<
\proscal{\dynamicMappingTer(\flow{\variableS}{\dynamicMappingTer}{\statebis\subZero},\variableS)}{\dual} 
\eqfinp
\]
Therefore, the implication~\eqref{eq:propertyDominatedMapping2}
holds true.
\medskip

\noindent $\bullet$
Second, we prove~\eqref{eq:monotone-flows}.

For this purpose, we consider
\( \state\subZero,\statebis\subZero \in \STATE \) such that
\( \state\subZero \preceq_\cone \statebis\subZero \).
Then, we take \( \anyVector \in \interior\cone \neq \emptyset \)
and, for any $\epsilon>0$, we consider the following differential equation 
\begin{equation*} %\label{eq:perturbedDynamic2}
\dot{\state} = \dynamicMappingTer_\epsilon(\state,\variableTime) \defined
\dynamicMappingTer(\state,\variableTime) + \epsilon\anyVector \eqsepv
\state(0) = \statebis\subZero + \epsilon\anyVector 
\eqfinp
\end{equation*}
From the assumptions on the dynamics mapping~\(\dynamicMappingTer\), 
the above differential equation has a unique solution $\state_\epsilon(t)=
\flow{\variableTime}{\dynamicMappingTer_{\epsilon}}{\statebis\subZero +
  \epsilon\anyVector}$, defined for all $t \in \DomainTime$, and which satisfies
  \begin{equation}
  \state_\epsilon(t) = \statebis\subZero+ (1+t)\epsilon\anyVector + 
\int_0^t \dynamicMappingTer(\state_\epsilon(s),s)ds 
\eqsepv \forall t \in \DomainTime 
\eqfinp 
\label{eq:epsilon-flow}
\end{equation}
By an easy adaptation of the classical proof that solutions of
ordinary differential equations continuously depend on a 
continuous parameter (see for instance \cite{hale:1980}), 
we get the following result:
for every $t \ge 0$, 
we have $\state_\epsilon(t) \to \state(t)$ when $\epsilon \downarrow 0$, 
where $\state(\cdot)$ is solution of the differential equation
\( \dot{\state} = \dynamicMappingTer(\state,\variableTime) \),
\( \state(0) = \statebis\subZero \), that is, 
\( \state_\epsilon(t) \to
\flow{\variableTime}{\dynamicMappingTer}{\statebis\subZero}\) when $\epsilon \downarrow 0$.

Now, since \(\state\subZero \preceq_\cone \statebis\subZero\) 
and \(\dynamicMappingBis(\state,\variableTime) \preceq_\cone 
\dynamicMappingTer(\state,\variableTime)\), for all $(\state,t) \in \RR^n
\times\DomainTime$, 
we obtain that \( \state\subZero \ll_\cone \statebis\subZero + \epsilon\anyVector \)
and \( \dynamicMappingBis(\state,\variableTime)
\ll_\cone 
\dynamicMappingTer_\epsilon(\state,\variableTime)
\), $\forall (\state,\variableTime) \in \STATE\times \DomainTime$,
by the definition~\eqref{eq:strict_preorder} of the relation~\( \ll_\cone \),
where we have used that \(\cone + \interior\cone \subset  \interior\cone \) and 
\(\epsilon ( \interior\cone ) \subset  \interior\cone\), for all \(\epsilon >0\).
Thus, we can apply the implication~\eqref{eq:propertyDominatedMapping2}
established in the first part of the proof, and get 
\[
\flow{\variableTime}{\dynamicMappingBis }{\state\subZero} 
\ll_\cone 
\flow{\variableTime}{\dynamicMappingTer_{\epsilon}}{\statebis\subZero 
+ \epsilon\anyVector}=\state_\epsilon(t) \eqsepv \forall \variableTime \in
\DomainTime 
\eqfinv
\]
where \(\state_\epsilon(t)\) is given by~\eqref{eq:epsilon-flow}.
Since \(\state_\epsilon(t) \to
\flow{\variableTime}{\dynamicMappingTer}{\statebis\subZero}\) 
when $\epsilon  \downarrow 0$, for all \(t \in \DomainTime\), and 
since the cone~\( \cone \) is closed, we finally get that 
\[
\flow{\variableTime}{\dynamicMappingBis }{\state\subZero} 
\preceq_\cone \flow{\variableTime}{\dynamicMappingTer}{\statebis\subZero}
\eqsepv \forall \variableTime \in \DomainTime 
\eqfinv
\]
which is the desired result~\eqref{eq:monotone-flows}. 

\end{proof}

\end{document}